\documentclass{amsart}
\usepackage{amsmath,amsthm,latexsym,verbatim}
\usepackage[all]{xypic}
\usepackage{amssymb}
\usepackage{fancyhdr}
\usepackage{mathrsfs}
\usepackage{hyperref}
\usepackage{tikz}
\usetikzlibrary{arrows}

\theoremstyle{definition}

\newtheorem{Def}{Definition}[section]
\theoremstyle{plain}
\newtheorem{Thm}{Theorem}[section]
\newtheorem{Lem}[Thm]{Lemma}

\newtheorem{Prop}[Thm]{Proposition}

\newtheorem{rem}{Remark}[section]


\newcommand{\proj}{\textnormal{proj-lim}}

\newcommand{\R}{\mathbb{R}}

\newcommand{\C}{\mathbb{C}}

\newcommand{\Z}{\mathbb{Z}}
\newcommand{\N}{\mathbb{N}}
\newcommand{\Si}{\mathscr{S}}

\newcommand{\s}{\mathbf{s}}

\newcommand{\tr}{\textrm{tr}}
\newcommand{\op}{\textnormal{Op}}

\newcommand{\dbar}{\;{\mathchar'26\mkern-10mu  d}}
\newcommand{\limproj}{\textnormal{proj-lim}}

\addtolength{\headwidth}{1cm}
\addtolength{\headheight}{10.2pt}
 
\pagestyle{plain}
\fancypagestyle{plain}{
	\fancyhead{}
	
}

\author{Ubertino Battisti}

\address{Dipartimento di Matematica, Universit\`a di Torino, Italy}

\email{ubertino.battisti@unito.it}

\author{Sandro Coriasco}

\address{Dipartimento di Matematica, Universit\`a di Torino, Italy}

\email{sandro.coriasco@unito.it}

\author{Elmar Schrohe}

\address{Institut f\"ur Analysis, Leibniz Universit\"at Hannover, Germany}

\email{schrohe@math.uni-hannover.de}

\title{On a Class of Fourier Integral Operators\\on Manifolds with Boundary}

\keywords{Fourier integral operator, manifold with boundary, boundary-pre\-ser\-ving symplectomorphism}

\subjclass{Primary: 35S30; Secondary: 46F05, 46F10, 47L15, 47L80}

%
%
\def\sphi{{^*\!\Phi}}
\newcommand\norm[1]{\langle {#1} \rangle}
\def\SG{\mathrm{SG}}

\hyphenation{Gott-fried}

\begin{document}

\begin{abstract}
We study a class of Fourier integral operators on compact manifolds with boundary $X$ and $Y$, associated with a natural class of symplectomorphisms $\chi\colon T^*Y\setminus 0\to T^*X\setminus 0$, namely, those which preserve the boundary. A calculus of Boutet de Monvel's type can be defined for such Fourier integral operators, and appropriate continuity properties established. 
One of the key features of this calculus is that the local representations of these operators
are given by operator-valued symbols acting on Schwartz functions or temperate distributions. Here we focus on properties of the corresponding local phase functions, which allow to prove this result in a rather straightforward way.

\end{abstract}

\maketitle

\section{Introduction}
In \cite{BCS14} we developed a Boutet de Monvel type calculus of (block matrices of) Fourier integral operators on compact manifolds with boundary. 
We recall the basic features: 
In the sequel we fix two compact $n$-dimensional manifolds $X$ and $Y$ with boundary and a symplectomorphism $$\chi: T^*Y \setminus 0 \to T^*X\setminus 0,$$ 
which is positively 
homogeneous of degree $1$ in the fibers and preserves the boundary, that is, 
$\pi_{\partial X}\circ\chi=b\circ\pi_{\partial Y}$, with a diffeomorphism 
$b\colon \partial Y\to\partial X$ and the canonical projections $\pi_{\partial X}$, $\pi_{\partial Y}$ at the boundaries.
By a variant of Moser's trick, cf.  \cite{DAS01}, Chapter 7, $\chi$ can be extended to a symplectomorphism $$\tilde{\chi}: T^*\widetilde{Y} \setminus 0 \to T^*\widetilde{X}\setminus 0,$$ 
where $\widetilde{Y}$ and $\widetilde{X}$ are neighborhoods of $X$ and $Y$, in  closed $n$-dimensional manifolds $X^\prime$ and  $Y^\prime$ containing
$X$ and $Y$, respectively. 
It turns out that the homogeneity of $\chi$, together with the fact  that it preserves the boundary, implies that $\chi$ induces a symplectomorphism 
$$\chi_\partial: T^*\partial Y \setminus 0 \to T^*\partial X\setminus 0 ,$$ 
which is the lift of the diffeomorphism $b: \partial Y \to \partial X$, cf.\ Lemma \ref{lem:chidelta}, below. 
We then considered truncated Fourier integral operators of the form
$$A^+=r^+A^\chi e^+.$$
Here, $e^+: C^\infty(Y)\to C^\infty(\tilde Y)$ is the operator of extension by zero, 
$A^\chi$ is a Fourier integral operator whose kernel is an Lagrangian distribution 
associated with the graph of $\tilde \chi$ and $r^+$ denotes the restrictions of distributions 
on $\tilde X$ to $\mathrm{int} X $. It is well known that the elements of the subclass of Fourier integral operators associated with graphs of symplectomorphisms have many good properties, which, in a sense, make their calculus similar to the calculus of pseudodifferential operators, see L. H\"ormander \cite{HO04}. 
These similarities have been used to great advantage in \cite{BCS14}. In particular, the phase functions 
$\phi(x,y,\xi)=\psi(x,\xi)-\langle y,\xi\rangle$  
of such operators have a very special structure near the boundary, due to the fact that the boundary is preserved. 
In order to ensure good mapping properties, we moreover made the assumption that all components of the symplectomorphism $\tilde{\chi}$ (that is, the first derivatives of $\psi$), satisfy \textit{transmission property} at the boundary. 

We remark that the above assumptions are all natural. 
In a sense, they provide one of the simplest extensions of the concept of Fourier integral operator from the case of closed manifolds to the case of compact manifolds with boundary. 
In fact, we obtain an extension of the calculus of pseudodifferential boundary value problems defined by  Boutet de Monvel \cite{BU71}, which in our setting corresponds to $X=Y$ and $\chi=id$.  
In addition to being of interest in itself, our class of operators provides an analytic framework which can be used to study two different problems. 
The first is an index problem, analogous to the one considered by A.\ Weinstein in \cite{WE75}.
The second is the problem of classifying -- similarly as this was done by  J.J. Duistermaat and I. Singer in \cite{DS76} for the case of closed manifolds --  the order-preserving isomorphisms between the Boutet de Monvel algebras on $X$ and $Y$, using elements of our class of Fourier integral operators. 
We also note that our framework can be considered complementary to that 
introduced by A. Hirschowitz and A. Piriou in \cite{HP77}, who studied the transmission property for Fourier distributions conormal to hypersurfaces in $T^*X\setminus 0$.

Here we focus on one of the key features of the calculus, namely, showing how the properties of the symplectomorphism
$\chi$ reflect into those of the phase functions, so that the local representations of the Fourier integral operators associated with 
$\mathrm{graph}(\tilde{\chi})$ can be considered as operator-valued symbols. More precisely, let $A^+$ be as above and $(A^*)^+= r^+(A^\chi)^*e^+$ with the formal $L^2$ adjoint $(A^\chi)^*$ of $A^\chi$. 
Under our hypotheses, it is possible to prove that  
\begin{eqnarray}\label{eq:map}
A^+\colon C^\infty(Y) \to C^\infty(X)\text{ and  }(A^*)^+\colon C^\infty(X)\to C^\infty(Y)
\end{eqnarray}
continuously, in analogy with the corresponding results in the Boutet de Monvel calculus.
We will show that, for a symbol $a\in S^m(\R^n\times \R^n)$ satisfying the transmission condition, the operator family 
\begin{equation}\label{eq:opan}
A_n^\chi=
  \op^\psi_{n}(a): u \mapsto \int e^{i (\psi(x', x_n, \xi', \xi_n)- \psi_\partial(x',\xi'))} a(x', x_n, \xi', \xi_n) \widehat{u}(\xi_n) \dbar \xi_n,
 \end{equation}
describing the action of $A^\chi$ in the normal direction, is an operator-valued symbol acting from $\Si(\R)$ to itself and from $\Si^\prime(\R)$
to itself. We refer the reader to E. Schrohe \cite{SC01} and B.-W. Schulze \cite{SC98} for the precise definitions of the involved semigroup 
actions and of operator-valued symbols. 

In \eqref{eq:opan}, $\psi$ is a phase function which locally represents $\tilde{\chi}$ close to the boundaries, and $a$ is a symbol in  $S^m(\R^n\times \R^n)$. Both $a$ and $\psi$ are required to satisfy the transmission condition;  more details will be given, below. 
The phase $\psi_\partial$ represents the symplectomorphism $\chi_\partial$ between the cotangent bundles of the boundaries 
induced by $\chi$, that is 
$$\psi_\partial(x^\prime,\xi^\prime)=\psi(x^\prime,0,\xi^\prime,\xi_n).$$ 
Our main results are the following Theorems \ref{thm:main} and \ref{thm:main2}.
In the former, we prove that the phase function in $(x_n,\xi_n)\in\R^2$ appearing in the operator-valued estimates of $A_n^\chi$ is a regular SG phase function, in the sense of S. Coriasco \cite{CO99}, see also \cite{CO99b}. In the latter, we prove that $A_n^\chi$ is indeed an operator-valued symbol belonging to $S^m(\R^{n-1}, \R^{n-1}, \Si(\R), \Si(\R))$ and to $S^m(\R^{n-1}, \R^{n-1}, \Si^\prime(\R), \Si^\prime(\R))$,
as a corollary of Theorem \ref{thm:main}.

We recall the following definition, streamlined to our purposes: 

\begin{Def}A smooth function $\Phi$ on $\R\times \R$ is a regular SG phase function, provided
it has the following three properties
{\renewcommand{\labelenumi}{{\rm (P\arabic{enumi})}}
\begin{enumerate}
\item $\Phi\in S^{1,1}(\R\times\R)$, i.e. for all $a,\alpha\in\Z_+$ there exist 
$C_{a\alpha}>0$ such that 
$$|D^a_t D^\alpha_\tau\sphi(t,\tau)|\le C_{a\alpha}\norm{t}^{1-a}\norm{\tau}^{1-\alpha}, \quad t,\tau \in \R.$$

\item There exist $c,C>0$ such that 
$$c\norm{\tau}\le\norm{\Phi^\prime_t(t,\tau)}\le C\norm{\tau},
	c\norm{t}\le\norm{\Phi^\prime_\tau(t,\tau)}\le C\norm{t}, \quad t,\tau\in \R.$$

\item There exists an $ \varepsilon>0$ such that $$|\Phi^{\prime\prime}_{t\tau}(t,\tau)|\ge\varepsilon,\quad  t,\tau\in\R.$$
\end{enumerate}
}
\end{Def}

In the sequel we denote by $\omega\in C^\infty(\R)$ an even function,  
non-increasing on $\R_+$, with $\omega\equiv1$ on $[0,1/2]$ and  
$\omega(t)=0$ for $t\ge 1$. 
We write $\omega_k(t) = \omega(t/k)$, $k>0$. 

\noindent
\begin{Thm}\label{thm:main}
Let $\Omega'$ be a connected open subset of $\R^{n-1}$ and $\Omega= \Omega^\prime\times \, ]-1,1[$. Write coordinates  in $\Omega $ as $x=(x',x_n)$ with $x'\in \Omega'$ and $x_n\in\, ]-1,1[$.  
By $\xi=(\xi',\xi_n)\in\R^n\setminus\{0\}$ we denote the corresponding covariable.
We let 
$$\varphi(x,\xi)=\psi(x,\xi)-\psi_\partial(x^\prime,\xi^\prime)$$ 
and, for $k,K>0$
\[
	\Phi_{x^\prime,\xi^\prime}(x_n,\xi_n)=\Phi(x,\xi)=\omega_k(x_n)\varphi(x,\xi)+(1-\omega_k(x_n))K\cdot x_n\cdot\xi_n.
\]
%
Define\footnote{Here we assume, as it is of course possible without loss of generality, that $\psi$ is well-defined for $x_n\in[-2k,2k]$, $k>0$ small enough, and extend the first summand in the second line of the definition of $\sphi$ identically equal to $0$ when 
${|t|}/{\norm{\xi^\prime}}\ge2k$.}, for $(t,\tau)\in\R^2$,
\begin{equation}
\label{eq:1}
\begin{aligned}
	\sphi(t,\tau)&=\sphi_{x^\prime,\xi^\prime}(t,\tau)=\Phi\left(x^\prime,{t}/{\norm{\xi^\prime}},\xi^\prime,\tau\norm{\xi^\prime}\right)
	\\
	&=\omega_k\left(\frac{t}{\norm{\xi^\prime}}\right)\varphi\left(x^\prime,\frac{t}{\norm{\xi^\prime}},\xi^\prime,\tau\norm{\xi^\prime}\right)
	+ \left[1-\omega_k\left(\frac{t}{\norm{\xi^\prime}}\right)\right]K\cdot t\cdot\tau.
\end{aligned}
\end{equation}
Then $\sphi$ is a regular SG phase function, provided $K$ is large enough and $k$ 
is small enough. 
%
%
%
%
%
Moreover, the constants $C_{a\alpha}$, $a,\alpha\in\Z_+$, $c,C,\varepsilon$, appearing in the estimates {\rm (P1), (P2),} and {\rm (P3)} above do not depend on 
$(x^\prime,\xi^\prime)\in U^\prime\times(\R^{n-1}\setminus\{0\})$ for  $U^\prime\subset\subset\Omega^\prime$.
\end{Thm}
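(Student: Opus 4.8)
The plan is to disentangle two matters: the structure of the local phase $\psi$ near the boundary, and the elementary fact that $\sphi$ is nothing but $\Phi$ read in the rescaled variables $(x_n,\xi_n)=(t/\norm{\xi'},\tau\norm{\xi'})$. As preliminaries on $\psi$: since $\psi_\partial$ does not depend on $\xi_n$, the function $\varphi=\psi-\psi_\partial$ vanishes at $x_n=0$, so Taylor's formula in $x_n$ gives
$\varphi(x,\xi)=x_n\,\widetilde\varphi(x,\xi)$ with $\widetilde\varphi(x,\xi)=\int_0^1(\partial_{x_n}\psi)(x',\theta x_n,\xi)\,d\theta$, smooth and positively homogeneous of degree $1$ in $\xi$; in particular $\partial_{x_n}^p\partial_{\xi_n}^\alpha\widetilde\varphi$ is homogeneous of degree $1-\alpha$. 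Next, at $x_n=0$ the identity $\partial_{\xi_n}\psi\equiv0$ forces $\partial^2_{x'\xi_n}\psi=0$ (and $\partial^2_{\xi_n\xi_n}\psi=0$), so there $\partial^2_{x\xi}\psi$ is block-triangular and $\det(\partial^2_{x\xi}\psi)=\det(\partial^2_{x'\xi'}\psi_\partial)\cdot(\partial^2_{x_n\xi_n}\psi)$; non-degeneracy of $\psi$ and of $\psi_\partial$ then yields $(\partial^2_{x_n\xi_n}\psi)(x',0,\xi)\ne0$, and since this is homogeneous of degree $0$ and continuous on the connected set $U'\times\{|\xi|=1\}$ it has a fixed sign, which we may take positive. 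Hence one may fix $k>0$ so small that $\partial_{\xi_n}\widetilde\varphi(x,\xi)=\int_0^1(\partial^2_{x_n\xi_n}\psi)(x',\theta x_n,\xi)\,d\theta\ge\delta/2>0$ for $|x_n|\le k$, $x'\in U'$, $\xi\ne0$; all such suprema and infima are taken over the compact set $U'\times\{|\xi|=1\}$ (times $\{|x_n|\le k\}$) and are therefore independent of $(x',\xi')$.

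Writing $N:=\norm{\xi'}$, one has by definition $\sphi(t,\tau)=\Phi(x',t/N,\xi',\tau N)$, whence for all $a,\alpha\in\Z_+$
\[
  \partial_t^a\partial_\tau^\alpha\sphi(t,\tau)=N^{\alpha-a}\,(\partial_{x_n}^a\partial_{\xi_n}^\alpha\Phi)(x',t/N,\xi',\tau N).
\]
Off the support of $x_n\mapsto\omega_k(x_n)$ one has $\Phi=Kx_n\xi_n$, hence $\sphi=Kt\tau$, for which (P1)--(P3) hold at once with all constants — including $\varepsilon$ — depending only on $K$. On that support $|x_n|=|t|/N<k$, so $\norm{t}\le\sqrt{1+k^2}\,N$: this single estimate is what converts powers of $N$ into powers of $\norm{t}$. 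Finally, $|(\xi',\tau N)|\le N\norm{\tau}$ (and, on that support, $|(\xi',\tau N)|$ is comparable to $N\norm{\tau}$ once $N$ is large), so evaluating a quantity homogeneous of degree $d$ at $\xi=(\xi',\tau N)$ costs essentially $(N\norm{\tau})^{d}$; near $\xi=0$, where $d<0$ would be harmful, one uses in addition that $\psi$ may be taken smooth there, which modifies $A^\chi$ only by a smoothing operator.

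With these in hand the three conditions are routine. For (P1) one expands $\partial_{x_n}^a\partial_{\xi_n}^\alpha\Phi$ by Leibniz using $\varphi=x_n\widetilde\varphi$: the factor $x_n=t/N$ carried by $\varphi$ supplies an $N^{-1}$ cancelling the $N^{1-\alpha}$ from $\partial_{x_n}^p\partial_{\xi_n}^\alpha\widetilde\varphi$ at $\xi=(\xi',\tau N)$, the derivatives of $\omega_k$ are bounded by constants depending on the (now fixed) $k$ and their supports give $\norm{t}\lesssim N$, and the summand $(1-\omega_k)Kx_n\xi_n$ is estimated by hand; altogether $|\partial_t^a\partial_\tau^\alpha\sphi|\le C_{a\alpha}\norm{t}^{1-a}\norm{\tau}^{1-\alpha}$ with $C_{a\alpha}$ independent of $(x',\xi')$. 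For (P3), putting $\eta(s)=(s\omega(s))'$ (so $\eta\equiv1$ on $[-1/2,1/2]$, $\eta\le1$ everywhere, $\eta\equiv0$ off $[-1,1]$), one computes for $|x_n|\le k$
\[
  (\partial_{x_n}\partial_{\xi_n}\Phi)(x,\xi)=\eta(x_n/k)\,\partial_{\xi_n}\widetilde\varphi(x,\xi)+\omega_k(x_n)\,x_n\,(\partial^2_{x_n\xi_n}\widetilde\varphi)(x,\xi)+(1-\eta(x_n/k))\,K,
\]
and since $|x_n\,\partial^2_{x_n\xi_n}\widetilde\varphi|\le Ck$ is small while $\partial_{\xi_n}\widetilde\varphi\ge\delta/2$, the cases $\eta(x_n/k)\in[0,1]$ and $\eta(x_n/k)<0$ give this quantity $\ge\min(\delta/2,K)-Ck$ and $\ge K-C_\eta C-Ck$ respectively; choosing $K$ large and $k$ small makes $\sphi''_{t\tau}$ bounded below by a fixed $\varepsilon>0$. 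For (P2): $\sphi'_\tau=t\,Q$ with $Q=\omega_k(x_n)\partial_{\xi_n}\widetilde\varphi+(1-\omega_k(x_n))K\in[\delta/2,\max(C,K)]$, whence $c\norm{t}\le\norm{\sphi'_\tau}\le C\norm{t}$; the bound $\norm{\sphi'_t}\le C\norm{\tau}$ follows from the degree-one homogeneity of $\widetilde\varphi$ and $\partial_{x_n}\widetilde\varphi$ exactly as in (P1); and $\norm{\sphi'_t}\ge c\norm{\tau}$ follows from (P3) by integrating $\sphi''_{t\tau}$ in $\tau$ together with the uniform bound $|\sphi'_t(t,0)|\le C$.

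The one genuine difficulty is the structural lower bound $(\partial^2_{x_n\xi_n}\psi)(x',0,\xi)\ge\delta>0$ near the boundary: here the homogeneity of $\chi$, its non-degeneracy, and its boundary-preserving property (cf.\ Lemma~\ref{lem:chidelta}) are all used, and the uniformity of $\delta$ in $(x',\xi')$ must be obtained from homogeneity and compactness. The only other delicate point is the transition zone $k/2\le|x_n|\le k$ in (P3), where the derivatives of $\omega_k$ enter and must be handled not by brute force but by observing that $(1-\eta(x_n/k))K\ge0$ — a consequence of $\omega$ being non-increasing on $\R_+$ — so that enlarging $K$ can only help. Everything else is a routine application of Leibniz' rule, homogeneity counting, and the support properties of $\omega_k$.
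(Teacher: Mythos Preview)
Your argument is correct and rests on the same structural ingredients as the paper's proof: the vanishing $\varphi(x',0,\xi)=0$ (so that a factor $x_n$ can be pulled out), the non-degeneracy $\psi''_{x_n\xi_n}(x',0,\xi)\neq0$ coming from the block-triangular form of $\partial^2_{x\xi}\psi$ at the boundary, the support property $\norm{t}\lesssim\norm{\xi'}$ on $\supp\omega_k(\cdot/\norm{\xi'})$, and convex-combination lower bounds in the transition zone. The organization, however, differs in two respects worth recording. First, you package the $(\mathrm{P}3)$ computation via $\eta(s)=(s\omega(s))'$, which makes the convex-combination structure transparent; the paper instead expands $\sphi''_{t\tau}$ directly and argues term by term. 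Second, and more substantially, for the lower bound $\norm{\sphi'_t}\gtrsim\norm{\tau}$ in $(\mathrm{P}2)$ you integrate $\sphi''_{t\tau}\ge\varepsilon$ in $\tau$ against the uniform bound on $\sphi'_t(t,0)$; the paper instead writes $\sphi'_t=B\cdot\tau$ for $|\tau|\ge1$ and carries out a separate analysis of the coefficient $B$ in the transition region, choosing $K$ large to dominate the bounded remainder. Your route is shorter and avoids that case analysis, at the cost of having to establish $(\mathrm{P}3)$ before completing $(\mathrm{P}2)$; the paper's direct argument keeps the three conditions logically independent. Both approaches need $K$ large and $k$ small at exactly the same junctures.
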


\begin{Thm}\label{thm:main2}
	Under the hypotheses of Theorem \ref{thm:main} the operator 
	$A_n^\chi$, defined in \eqref{eq:opan}, satisfies
	\begin{equation}\label{eq:opsymb}
	A_n^\chi \in S^m(\R^{n-1}, \R^{n-1}, \Si(\R), \Si(\R))
	\text{ and }
	A_n^\chi \in S^m(\R^{n-1}, \R^{n-1}, \Si^\prime(\R), \Si^\prime(\R)),
	\end{equation}
	for any $a\in S^m(\R^n\times \R^n)$ satisfying the transmission condition, supported in a suitably small collar 
	neighborhood of the boundary.
\end{Thm}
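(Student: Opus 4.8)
The plan is to deduce the statement from Theorem~\ref{thm:main} by conjugating $A_n^\chi$ with the group action $\kappa=(\kappa_\lambda)_{\lambda>0}$, $(\kappa_\lambda v)(x_n)=\lambda^{1/2}v(\lambda x_n)$, which maps $\Si(\R)$ onto itself, is unitary on $L^2(\R)$, and (by transposition) acts on $\Si'(\R)$. A change of variables $\xi_n=\langle\xi'\rangle\tau$, $x_n=t/\langle\xi'\rangle$ in \eqref{eq:opan} gives
\[
\bigl(\kappa_{\langle\xi'\rangle}^{-1}A_n^\chi(x',\xi')\,\kappa_{\langle\xi'\rangle}v\bigr)(t)
=\int e^{i\,\varphi(x',t/\langle\xi'\rangle,\,\xi',\,\tau\langle\xi'\rangle)}\;a\bigl(x',t/\langle\xi'\rangle,\,\xi',\,\tau\langle\xi'\rangle\bigr)\,\widehat v(\tau)\,\dbar\tau ,
\]
and, as $a$ is supported in a collar of $\partial X$ so thin that $\omega_k\equiv1$ there, on the support of the amplitude the phase $\varphi$ may be replaced by $\Phi$ as in \eqref{eq:1}, that is, by $\sphi_{x',\xi'}(t,\tau)$. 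Hence $\kappa_{\langle\xi'\rangle}^{-1}A_n^\chi(x',\xi')\kappa_{\langle\xi'\rangle}$ is the $\SG$ Fourier integral operator on $\R$ with phase $\sphi_{x',\xi'}$ and amplitude $a(x',t/\langle\xi'\rangle,\xi',\tau\langle\xi'\rangle)$; differentiating \eqref{eq:opan} and using Leibniz in $e^{i\varphi}a$, the same holds for $D_{x'}^\gamma D_{\xi'}^\beta A_n^\chi(x',\xi')$, with $\sphi_{x',\xi'}$ unchanged and the amplitude replaced by a finite sum of rescaled terms $\bigl((\partial_{\xi'}^{\beta_1}\varphi)\cdots(\partial_{\xi'}^{\beta_p}\varphi)\,\partial_{x'}^\gamma\partial_{\xi'}^{\beta_0}a\bigr)(x',t/\langle\xi'\rangle,\xi',\tau\langle\xi'\rangle)$, where $\beta_0+\cdots+\beta_p=\beta$ and $|\beta_i|\ge1$ for $i\ge1$.

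Two facts then have to be combined. First, by Theorem~\ref{thm:main}, $\sphi_{x',\xi'}$ is a regular $\SG$ phase function on $\R\times\R$ with all constants uniform in $(x',\xi')\in U'\times(\R^{n-1}\setminus\{0\})$. Second, each of the finitely many rescaled amplitudes above is an $\SG$ symbol on $\R\times\R$ with symbol seminorms bounded by $C\langle\xi'\rangle^{m-|\beta|}$. For the latter one uses the identity $\langle(\xi',\tau\langle\xi'\rangle)\rangle=\langle\xi'\rangle\langle\tau\rangle$, by which the order $m$ of $a$ becomes $\langle\xi'\rangle^m\langle\tau\rangle^m$, while each $\partial_t$ on the rescaled arguments costs $\langle\xi'\rangle^{-1}\lesssim\langle t\rangle^{-1}$ on the support; each $\partial_{\xi'}$ either lowers the order of $a$ by one, or produces a factor $\partial_{\xi'}\varphi=x_n\,\partial_{\xi'}\varphi_1$ — here $\varphi=x_n\varphi_1$ with $\varphi_1\in S^1$, which is exactly the boundary structure of $\varphi$ coming from the boundary‑preservation of $\chi$ — and after the rescaling this factor turns into $(t/\langle\xi'\rangle)(\partial_{\xi'}\varphi_1)(x',t/\langle\xi'\rangle,\xi',\tau\langle\xi'\rangle)$, i.e.\ an innocuous first‑order $t$‑multiplier together with a gain $\langle\xi'\rangle^{-1}$; the $x'$‑derivatives cost nothing. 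Collecting the exponents yields the bound $C_{\beta\gamma}\langle\xi'\rangle^{m-|\beta|}$ for each amplitude.

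It remains to invoke the continuity of $\SG$ Fourier integral operators with regular $\SG$ phase, due to S.~Coriasco \cite{CO99,CO99b}: such an operator maps $\Si(\R)$ continuously to $\Si(\R)$ and $\Si'(\R)$ continuously to $\Si'(\R)$, and its seminorms as an operator on these spaces are dominated by finitely many symbol seminorms of the amplitude, with constants depending only on the constants of the phase. Applying this to the amplitudes of the previous paragraph and using the uniformity in Theorem~\ref{thm:main} gives
\[
\bigl\|\kappa_{\langle\xi'\rangle}^{-1}\,D_{x'}^\gamma D_{\xi'}^\beta A_n^\chi(x',\xi')\,\kappa_{\langle\xi'\rangle}\bigr\|_{\mathcal{L}(\Si(\R))}
+\bigl\|\kappa_{\langle\xi'\rangle}^{-1}\,D_{x'}^\gamma D_{\xi'}^\beta A_n^\chi(x',\xi')\,\kappa_{\langle\xi'\rangle}\bigr\|_{\mathcal{L}(\Si'(\R))}
\le C_{\beta\gamma}\,\langle\xi'\rangle^{m-|\beta|}
\]
uniformly for $x'$ in compact subsets of $\Omega'$ and $\xi'\in\R^{n-1}\setminus\{0\}$, which is exactly the assertion \eqref{eq:opsymb}.

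The substance of the proof lies in Theorem~\ref{thm:main}: once the rescaled phase is known to be a regular $\SG$ phase uniformly in the parameters, the $\SG$ Fourier integral operator calculus applies verbatim. The only genuine bookkeeping left for Theorem~\ref{thm:main2} is the order count for the differentiated amplitudes, and its crux is that each derivative $\partial_{\xi'}\varphi$ inherits the boundary vanishing $\partial_{\xi'}\varphi=x_n\partial_{\xi'}\varphi_1$, so that the rescaling turns it into a harmless $t$‑multiplier and simultaneously returns the factor $\langle\xi'\rangle^{-1}$ that the operator‑valued symbol estimate of order $m-|\beta|$ demands.
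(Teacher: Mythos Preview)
Your proof is correct and follows essentially the same route as the paper: conjugate by $\kappa_{\langle\xi'\rangle}$, recognize the result as an $\SG$ Fourier integral operator with phase $\sphi_{x',\xi'}$, invoke Theorem~\ref{thm:main} for the phase, control the rescaled amplitude through the boundary vanishing $\varphi=x_n\varphi_1$, and apply the continuity theory of \cite{CO99}. The paper packages the amplitude bookkeeping into the auxiliary class $BS^{m}(\R^{n-1},\R^{n-1};S^l(\R))$ and an inductive lemma (Lemma~\ref{Lem:induc}), and obtains the $\Si'(\R)$ statement by duality via the transpose, whereas you argue both directly from the $\SG$ calculus; these are cosmetic differences.

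One point of care: your Leibniz expansion of $D_{x'}^\gamma D_{\xi'}^\beta(e^{i\varphi}a)$ is written with only $\partial_{\xi'}$-derivatives falling on $\varphi$, but of course $\partial_{x'}$-derivatives hit the phase as well, producing factors $\partial_{x'}^{\gamma_j}\partial_{\xi'}^{\beta_j}\varphi$. Your remark that ``the $x'$-derivatives cost nothing'' is right as far as the $\langle\xi'\rangle^{m-|\beta|}$ count goes---each such factor still vanishes at $x_n=0$ and after rescaling contributes $\langle\xi'\rangle^{-|\beta_j|}$---but it does raise the $\tau$-order of the amplitude (this is the ``$S^{m+|\beta|}$'' in the paper's lemma, with $\beta$ there the $x'$-multi-index). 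That is harmless for the $\Si(\R)$ and $\Si'(\R)$ mapping properties, so the conclusion stands; just make the expansion explicit so the reader sees that the mechanism $\partial\varphi=x_n\cdot(\text{symbol})$ is being applied to \emph{all} phase factors, not only the $\xi'$-ones.
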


We remark that the above theorems are essential to achieve the calculus for the Fourier integral operators of Boutet de Monvel type that we consider in \cite{BCS14}. They are needed, in particular, to show that the operator
\begin{equation}\label{eq:opan2}
A_n^+=
  r^+\op^\psi_{n}(a)e^+: u \mapsto r^+\int e^{i [\psi(x', x_n, \xi', \xi_n)- \psi_\partial(x',\xi')]} a(x', x_n, \xi', \xi_n) \widehat{e^+u}(\xi_n) \dbar \xi_n
\end{equation}
describing the action of $A^+$ in the normal direction, is also an operator-valued symbol,
acting from $\Si(\R_+)$ to itself and from $\Si^\prime(\R_+)$ to itself. Here,
\[
	\Si(\R_+)=\{h=g|_{\R_+}\colon g\in\Si(\R)\},
\]
endowed with its natural topology, and $\Si^\prime(\R_+)$ is the dual of $\Si(\R_+)$.

While this result may be  expected, its proof is rather delicate and requires a careful analysis of the properties of the kernels of the operators involved. Note also that, in strong contrast with the corresponding result 
for the Boutet de Monvel calculus, it is by no means true that $A^+_n$ belongs to
$S^m(\R^{n-1}, \R^{n-1};$ $ H^s(\R_+), H^{s-m}(\R_+))$ for each $s\in\R$,
see the counterexample in \cite{BCS14}.

\medskip

The paper is organized as follows. In Section \ref{sec2} we recall the main properties of the local phase
functions associated with $\tilde \chi$  near the boundaries.
In Section \ref{sec3} we describe the basic elements of the theory of the Fourier integral operators we consider. The material in Sections \ref{sec2} and \ref{sec3} is taken from \cite{BCS14}. We refer the reader to this paper for further details. Finally, in Section \ref{sec:pf} we prove our main Theorems \ref{thm:main} and \ref{thm:main2}. 

\subsection*{Acknowledgements.} 
The first author was partially supported by the DAAD during his visit to the Gottfried Wilhelm Leibniz Universit\"at Hannover in the Academic Year 2010/2011, when this research project started.
The second author gratefully acknowledges the support by the
Institut f\"ur Analysis, Fakult\"at f\"ur Mathematik und Physik, Gottfried Wilhelm Leibniz Universit\"at Hannover, during his stays as Visiting Scientist in the Academic Years 2011/2012 and 2012/2013,
when this research has been partly developed.

\section{Generating Functions for a Class of\\Boundary-preserving Symplectomorphisms}
\label{sec2}

The following lemma, which is proven in \cite{ME81}, analyzes symplectomorphisms of the type we consider. 
\begin{Lem}
\label{lem:chidelta}
Let $X$, $Y$  and $\chi$ be as above. 
Then $\chi$ induces a symplectomorphism $\chi_{\partial}: T^*\partial Y \setminus 0 \to T^*\partial X\setminus 0$,
 positively homogeneous of order one in the fibers, such that the following diagram commutes:
\[
\xymatrix{
&T^*_{\partial Y}Y \setminus N^*{\partial Y} \ar@{^(->}[d]^{i^*_Y}  \ar[r]^{\chi}  & T^*_{\partial X}X
\setminus N^*{\partial X}  \ar@{^(->}[d]^{i^*_X} \\
&T^*\partial Y \setminus 0 \ar[r]^{\chi_{\partial}}  & T^*\partial X \setminus 0.
}
\]
\end{Lem}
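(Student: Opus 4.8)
The plan is to construct $\chi_\partial$ explicitly in boundary-normal coordinates and to read off every required property from the single fact that a symplectomorphism that is positively homogeneous of degree one in the fibres preserves the tautological one-form. First I would fix collar coordinates $(y',y_n)$ near $\partial Y$, with $y_n\ge 0$ a boundary defining function and $y'$ coordinates on $\partial Y$, together with dual fibre coordinates $(\eta',\eta_n)$; similarly $(x',x_n)$ and $(\xi',\xi_n)$ near $\partial X$. The hypothesis $\pi_{\partial X}\circ\chi=b\circ\pi_{\partial Y}$ says exactly that $\chi$ maps the submanifold $S_Y=\{y_n=0\}\subset T^*Y\setminus 0$, which is $T^*_{\partial Y}Y\setminus 0$, diffeomorphically onto $S_X=\{x_n=0\}=T^*_{\partial X}X\setminus 0$ (the inverse inclusion $\chi^{-1}(S_X)\subseteq S_Y$ follows because $\chi^{-1}$ preserves the boundary via $b^{-1}$), and that along $S_Y$ one has $x_n=0$ and $x'=b(y')$, independent of the fibre variables.

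The heart is a pullback computation. Since $\chi$ is homogeneous of degree one, it preserves the tautological one-forms, $\chi^*\alpha_X=\alpha_Y$, with $\alpha_Y=\eta'\,dy'+\eta_n\,dy_n$ and $\alpha_X=\xi'\,dx'+\xi_n\,dx_n$. I would restrict this identity to $S_Y$ through the inclusion $j_Y\colon S_Y\hookrightarrow T^*Y$. On $S_Y$ one has $dy_n=0$, so $j_Y^*\alpha_Y=\eta'\,dy'$, and likewise $j_X^*\alpha_X=\xi'\,dx'$ on $S_X$. Using $\chi\circ j_Y=j_X\circ(\chi|_{S_Y})$ and substituting $x'=b(y')$, which contributes only the differential $dx'=(\partial b/\partial y')\,dy'$, I obtain the relation
\[
\sum_k \xi'_k\,\frac{\partial b_k}{\partial y'_j}=\eta'_j,\qquad j=1,\dots,n-1,
\]
as an identity of functions on $S_Y$. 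Because $b$ is a diffeomorphism, the Jacobian $\partial b/\partial y'$ is invertible, so this determines $\xi'$ as a function of $(y',\eta')$ alone, independent of $\eta_n$, and shows that $\xi'=0$ if and only if $\eta'=0$.

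From this the lemma follows almost formally. The equivalence $\xi'=0\Leftrightarrow\eta'=0$ means $\chi$ carries $N^*\partial Y\setminus 0$ onto $N^*\partial X\setminus 0$, hence $T^*_{\partial Y}Y\setminus N^*\partial Y$ into $T^*_{\partial X}X\setminus N^*\partial X$, so the top arrow of the diagram is well defined. The independence of $\eta_n$ means that $i^*_X\circ\chi$ factors through $i^*_Y$, yielding a well-defined map $\chi_\partial(y',\eta')=(b(y'),\xi'(y',\eta'))$ with $\chi_\partial\circ i^*_Y=i^*_X\circ\chi$, i.e.\ the diagram commutes. The displayed relation is precisely $\chi_\partial^*\alpha_{\partial X}=\alpha_{\partial Y}$, so $\chi_\partial$ preserves the tautological form and is therefore a symplectomorphism; in fact it is the cotangent lift of $b$, whence it is a diffeomorphism onto $T^*\partial X\setminus 0$ and positively homogeneous of degree one in the fibres, as claimed.

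The main obstacle is the careful bookkeeping in the restriction step: one must check that pulling the one-form identity back to $S_Y$ genuinely kills the $dy_n$ (respectively $dx_n$) term and that $x'=b(y')$ contributes only a $dy'$ differential, so that no $d\eta$ components survive and the relation really reduces to an identity between functions. A secondary point is coordinate-independence, but since $\chi_\partial$ is pinned down intrinsically by the commuting diagram, once the local factorization through $i^*_Y$ is established the map is well defined globally and the local descriptions patch automatically, so no separate gluing argument is needed.
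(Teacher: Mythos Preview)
Your argument is correct. The key observation---that a degree-one homogeneous symplectomorphism preserves the tautological one-form, so that restricting $\chi^*\alpha_X=\alpha_Y$ to $\{y_n=0\}$ and using $x'=b(y')$ forces $\sum_k\xi'_k\,\partial_{y'_j}b_k=\eta'_j$---is exactly what is needed: it simultaneously gives the $\eta_n$-independence of $(x',\xi')$, the bijection between conormals, and the identification of $\chi_\partial$ with the cotangent lift of $b$. The bookkeeping you flag is fine; since $x'=b(y')$ depends only on $y'$, the pulled-back form has no $d\eta'$ or $d\eta_n$ components, and the coefficient comparison is legitimate.

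For comparison: the paper itself does not prove this lemma but refers to Melrose \cite{ME81}. The subsequent discussion in the paper (the Jacobian computation \eqref{eq:jacob}--\eqref{eq:simplbound} and Remark~\ref{rem:triv}) recovers the same consequences---$\partial_{\eta_n}x'=\partial_{\eta_n}\xi'=0$ at the boundary, and $\chi_\partial$ being the lift of a boundary diffeomorphism---by direct analysis of the Jacobian of $\chi$ together with the determinant constraint for symplectic matrices. Your route via the tautological form is more conceptual and somewhat shorter: it yields in one stroke that $\chi_\partial$ is the cotangent lift of $b$ (hence linear on fibres), whereas the paper obtains linearity separately in Remark~\ref{rem:triv} from smoothness across the zero section plus homogeneity. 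The Jacobian viewpoint, on the other hand, delivers the nondegeneracy $\partial_{y_n}x_n\cdot\partial_{\eta_n}\xi_n=1$ at the boundary as a byproduct, which the paper needs later for \eqref{eq:nondeg}; your argument does not produce this directly.
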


\begin{rem}
\label{rem:triv}
In Lemma \ref{lem:chidelta} we have considered the induced symplectomorphism $\chi_{\partial}$ outside the zero section. Actually, since $\chi$ is smooth on $\partial T^*Y \setminus 0$, the induced symplectomorphism $\chi_{\partial}$
is also smooth on the zero section. Since $\chi_{\partial}$ is positively homogeneous of order one in the fibers, the smoothness at the zero section implies that $\chi_{\partial}$ is trivial in the fibers. 
That is, $\chi_{\partial}$ is the lift of a diffeomorphism of the boundaries, cf.\ \cite{DAS01}. 
\end{rem}

It is useful to study the Jacobian of the local representation of $\chi$ in a collar neighborhood of the boundaries. We write
\begin{equation}
\label{eq:compsympl}
 \begin{split}
  \chi: T^*Y \setminus 0 &\to T^*X\setminus 0\\
  (y', y_n, \eta', \eta_n)& \\
 \mapsto&(x'(y', y_n, \eta', \eta_n), x_n(y', y_n, \eta', \eta_n), \xi'(y', y_n, \eta', \eta_n), \xi_n(y', y_n, \eta', \eta_n)),\nonumber
 \end{split}
\end{equation}
where the coordinates $(y', y_n, \eta', \eta_n)$, $(x', x_n, \xi', \xi_n)$ determine a collar neighborhood of the boundary, that is, $y_n, x_n$ are (local) boundary defining functions on $Y$ and $X$, respectively. 
Since the boundary is preserved, $x_n(y', 0, \eta', \eta_n)=0$ for all $(y', \eta', \eta_n)$, hence $\partial_{y'}x_n$, 
$\partial_{\eta'}x_n$, $\partial_{\eta_n} x_n$ are identically zero at $y_n=0$. Moreover,  Lemma \ref{lem:chidelta}
implies that $x'$ and $\xi'$ define a symplectomorphism on the cotangent bundle of the boundary which is independent
of the conormal direction, that is $\partial_{\eta_n}x'$ and $\partial_{\eta_n}\xi'$ are identically zero at the boundary. 
Hence, the Jacobian of $\chi$ at the boundary has the form
\begin{equation}
\label{eq:jacob}
 J(\chi)|_{y_n=0}=\left(
\begin{array}{cclc}
\partial_{y'}x'_\partial & \partial_{\eta'}x'_\partial & \partial_{y_n}x'|_{y_n=0}& 0 \\
\partial_{y'}\xi'_\partial & \partial_{\eta'}\xi'_\partial & \partial_{y_n}\xi'|_{y_n=0}& 0 \\
0 & 0 & \partial_{y_n}x_n'|_{y_n=0}& 0 \\
\partial_{y'}\xi_n'|_{y_n=0} & \partial_{\eta'}\xi_n'|_{y_n=0} & \partial_{y_n}\xi_n'|_{y_n=0}& \partial_{\eta_n}\xi_n'|_{y_n=0} 
\end{array} \right),
\end{equation}
where $x'_\partial, \xi'_\partial$ are the functions $x',\xi'$ evaluated at $y_n=0$. From Lemma \ref{lem:chidelta} we know that $\chi$ induces a symplectomorphism $\chi_\partial$ on the boundary. Therefore
\begin{equation}
 \label{eq:simplbound}
 J(\chi_\partial)=
 \left(
 \begin{array}{cc}
  \partial_{y'}x'_\partial & \partial_{\eta'}x'_\partial\\
\partial_{y'}\xi'_\partial & \partial_{\eta'}\xi'_\partial 
 \end{array}
 \right)
\end{equation}
is a symplectic matrix, hence it has determinant $1$, with $x'_\partial, \xi'_\partial$ interpreted as the components of 
$\chi_\partial$ in the local coordinates $(y^\prime,\eta^\prime)$ on the boundary. Clearly, also $J(\chi)|_{y_n=0}$ has determinant equal to $1$, since $\chi$ is a symplectomorphism, and this implies that $\partial_{y_n}x_n \times \partial_{\eta_n}\xi_n=1$ for $y_n=0$. In particular $\partial_{y_n}x_n, \partial_{\eta_n}\xi_n$ can never vanish at the boundary. Since the boundary is compact,
these two functions are actually bounded away from zero when $y_n=0$, and therefore also in a sufficiently small collar neighborhood of the boundary.

We now recall a well known property of Lagrangian subspaces, which can be extended to the case of manifolds with boundary. We denote by $Z$ a manifold without boundary.
\begin{Prop}\label{prop:loclag}
Let $\Lambda \subset T^*Z\setminus 0 $ be a conic Lagrangian submanifold. Then, for all $\lambda_0=(z_0,\eta_0) 
\in \Lambda$, there exist a neighborhood $U_{z_0}$ and a phase function $\phi$ defined in a conic neighborhood
$U_{z_0} \times \Gamma$ in $U_{z_0} \times \R^N$, $N$  large enough, such that $\phi$ parametrizes $\Lambda$ in a conic neighborhood of $\lambda$. That is,
\[
  C_{\phi}=\{(z, \theta) \mid \phi'_\theta(z, \theta)=0\} \to T^*Z\setminus0 \colon(z, \theta) \mapsto (z, \phi'_z(z, \theta))
\]
induces a diffeomorphism in a small conic neighborhood  $U^\Lambda_{\lambda_0}$ of $\lambda_0$ in $\Lambda$.

Moreover, if $\tilde\Lambda \subseteq (T^*\tilde Y\setminus 0) \times (T^*\tilde X\setminus0)$ 
is locally defined by the graph of a symplectomorphism
$$\tilde \chi: T^*\tilde Y\setminus 0 \to T^*\tilde X\setminus0,$$
the phase function can be written in the form 
$$\phi(x, y, \theta)=\psi(x, \theta)- \langle y , \theta\rangle,$$ 
with $\phi \in C^\infty(\Omega_{x_0} \times \Omega_{y_0} \times
\Gamma)$, with $\Omega_{x_0}$ and $\Omega_{y_0}$ neighborhoods of $x_0\in \tilde X$ and $y_0\in \tilde Y$,  respectively,
and $\Gamma$ a cone in $\R^n\setminus 0$, $2n$ being the dimension of $\Lambda$.
\end{Prop}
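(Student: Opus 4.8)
The plan is to follow H\"ormander's local parametrization theorem for conic Lagrangian submanifolds, and then to make a particular choice of fibre variables in the case of a symplectomorphism graph.

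\textit{First assertion.} Write $m=\dim Z$ and $\lambda_0=(z_0,\eta_0)$. Since $T_{\lambda_0}\Lambda$ is a Lagrangian subspace of $T_{\lambda_0}(T^*Z)\cong\R^{2m}$, there is a partition $\{1,\dots,m\}=I\sqcup J$ such that the linear projection $(z,\eta)\mapsto(z_J,\eta_I)$ is injective on $T_{\lambda_0}\Lambda$; by the inverse function theorem it then restricts to a diffeomorphism of a neighbourhood of $\lambda_0$ in $\Lambda$ onto an open set $V\subseteq\R^{m}$, and I write the remaining coordinates on $\Lambda$ as $z_I=Z_I(z_J,\eta_I)$, $\eta_J=H_J(z_J,\eta_I)$. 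The one-form $\beta=\langle\eta_J,dz_J\rangle-\langle z_I,d\eta_I\rangle$ on $\Lambda$ has $d\beta=\omega|_\Lambda=0$, hence $\beta=d\psi$ on $V$ for some smooth $\psi=\psi(z_J,\eta_I)$, with $\partial_{z_J}\psi=H_J$ and $\partial_{\eta_I}\psi=-Z_I$. Since $\Lambda$ is conic and Lagrangian, the tautological one-form vanishes on it, so $\beta=\langle\eta,dz\rangle|_\Lambda-d\langle z_I,\eta_I\rangle|_\Lambda=-d\langle z_I,\eta_I\rangle|_\Lambda$, whence $\psi=-\langle Z_I,\eta_I\rangle$ up to an additive constant; as $Z_I$ is invariant under the fibrewise dilations $\eta\mapsto s\eta$ (being a $z$-coordinate), $\psi$ is positively homogeneous of degree one in $\eta_I$. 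Finally set $N=|I|$, put $\theta:=\eta_I\in\R^{N}$ and $\phi(z,\theta):=\psi(z_J,\theta)+\langle z_I,\theta\rangle$. Then $\phi$ is homogeneous of degree one in $\theta$; the equation $\phi'_\theta=0$ reads $z_I=Z_I(z_J,\theta)$, so $C_\phi$ is parametrized by $(z_J,\theta)\in V$ and is carried onto the said neighbourhood by $(z,\theta)\mapsto(z,\phi'_z)$, which on $C_\phi$ equals the point of $\Lambda$ with coordinates $(z_J,\eta_I)=(z_J,\theta)$; moreover $\partial_{z_I}\phi'_\theta=\mathrm{Id}$, so $\phi$ is non-degenerate. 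Shrinking $U_{z_0}$ and taking $\Gamma$ a conic neighbourhood of $(\eta_0)_I$ gives the first claim.

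\textit{Second assertion.} Take $Z=\tilde X\times\tilde Y$, $z=(x,y)$, and represent $\tilde\Lambda$ near $\lambda_0$ as $\{(x,y;\xi,-\eta):(x,\xi)=\tilde\chi(y,\eta)\}$, a conic Lagrangian of dimension $2n$ in $T^*Z$. I run the construction above with the special partition $J=\{x\text{-indices}\}$, $I=\{y\text{-indices}\}$, i.e.\ with $(x,-\eta)$ as coordinates on $\tilde\Lambda$. This choice is admissible, i.e.\ $(y,\eta)\mapsto(x(y,\eta),\eta)$ is a local diffeomorphism near $\lambda_0$, exactly when $\det\partial_y(\pi_{\tilde X}\circ\tilde\chi)(\lambda_0)\neq0$; this holds for the symplectomorphisms treated here, and near the boundary it follows from the block form of $J(\chi)|_{y_n=0}$ in \eqref{eq:jacob} together with the fact, recalled in Remark \ref{rem:triv}, that $\chi_\partial$ is the lift of a diffeomorphism (so that $\partial_{y'}x'_\partial$ is invertible and $\partial_{y_n}x_n$ does not vanish there). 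With this partition $z_I=y$ and $\theta$ plays the role of $-\eta$; after replacing $\theta$ by $-\theta$ (so that $\theta$ becomes the $\tilde Y$-covariable) and correspondingly relabelling $\psi$, the expression $\phi(z,\theta)=\psi(z_J,\theta)+\langle z_I,\theta\rangle$ becomes $\phi(x,y,\theta)=\psi(x,\theta)-\langle y,\theta\rangle$, with $N=n$ and $\psi$ smooth and positively homogeneous of degree one in $\theta$ on $\Omega_{x_0}\times\Omega_{y_0}\times\Gamma$.

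\textit{Main obstacle.} The delicate point is the choice of fibre variables. For the first assertion only the existence of \emph{some} admissible partition $I\sqcup J$ is required, which is pure linear algebra; for the second assertion one is forced to single out the full $y$-block, so the non-degeneracy $\det\partial_y(\pi_{\tilde X}\circ\tilde\chi)\neq0$ has to be verified, and it is precisely here that the boundary-preserving structure recalled in Section \ref{sec2} enters. The other point needing care is that the primitive $\psi$ can be chosen homogeneous of degree one, which, as above, follows from the vanishing of the tautological one-form on a conic Lagrangian.
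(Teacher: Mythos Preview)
The paper does not prove this proposition; it is introduced with the words ``We now recall a well known property of Lagrangian subspaces'' and stated without argument, so there is no proof in the paper to compare against. Your write-up is essentially the standard H\"ormander construction (mixed $(z_J,\eta_I)$-generating functions, homogeneity from vanishing of the tautological form on a conic Lagrangian), and the first assertion is handled correctly.

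For the second assertion you do the right thing and, importantly, you flag the point the paper leaves implicit: the special phase $\phi(x,y,\theta)=\psi(x,\theta)-\langle y,\theta\rangle$ with $\theta\in\R^n$ is available precisely when the partition $J=\{x\text{-indices}\}$, $I=\{y\text{-indices}\}$ is admissible, i.e.\ when $\det\partial_y(\pi_{\tilde X}\circ\tilde\chi)\neq0$. This is \emph{not} automatic for an arbitrary homogeneous symplectomorphism, so the proposition as literally stated is slightly stronger than what holds in full generality. Your justification via the block structure of $J(\chi)|_{y_n=0}$ in \eqref{eq:jacob}, together with the invertibility of $\partial_{y'}x'_\partial$ (Remark~\ref{rem:triv}) and the nonvanishing of $\partial_{y_n}x_n$, is exactly the argument that makes the second assertion valid in the setting of this paper (near the boundary; away from it one is in the classical situation and may have to precompose with a linear canonical transformation, as in H\"ormander, Vol.~IV, \S25.3). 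It would be worth saying explicitly that this extra hypothesis is being used, since the paper's statement suppresses it.

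One small cosmetic point: in the sign change $\theta\mapsto-\theta$ at the end, you are reparametrizing the fibre variables, which is harmless, but the resulting $\psi(x,\theta)$ is $\psi_{\mathrm{old}}(x,-\theta)$; you say ``relabelling $\psi$'', which is fine, just make sure a reader does not think the same function is meant.
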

\begin{rem}
\label{rem:psipartial}
 In Remark \ref{rem:triv}, we  noticed that $\chi$ induces a symplectomorphism 
 $\chi_\partial: T^*\partial Y \setminus 0 \to T^*\partial X\setminus 0$ 
 which is again positively homogeneous in the fibers. 
 Applying Proposition \ref{prop:loclag} to $\chi_\partial$ we obtain a phase function 
 $\phi_\partial(x', y', \theta')=\psi_\partial (x', \theta') - \langle y', \eta'\rangle$ 
 which represents $\chi_\partial$. Since $\chi_\partial$ is the lift of a diffeomorphism, the phase function $\psi_\partial (x',\theta')$
 is smooth at $\theta'=0$, therefore it is linear.
\end{rem}

For the sake of brevity, we do not recall the notion of Maslov bundle. For its description, see, e.g., \cite{HO03}.
The proof of the next Lemma \ref{lem:mas} can be found in \cite{BCS14}.

\begin{Lem}
\label{lem:mas}
The Maslov bundle of 
\[
\Lambda=\textnormal{graph}(\chi)'= \{(x, \xi), (y, -\eta)\mid \chi(y, -\eta)=(x, \xi)\} 
\subseteq (T^*X\setminus 0) \times (T^*Y \setminus 0)
\]
is trivial in a neighborhood of $\partial \Lambda= (\partial T^*Y \times \partial T^*X) \cap \Lambda$.
\end{Lem}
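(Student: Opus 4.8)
The plan is to reduce the triviality of the Maslov line bundle of $\Lambda$ near $\partial\Lambda$ to the vanishing of the Maslov class $m_\Lambda\in H^1(\Lambda;\Z)$ in a neighborhood of $\partial\Lambda$, and to prove the latter by exhibiting an explicit nonvanishing global section. First I would note that $\Lambda$ is a manifold with boundary $\partial\Lambda$, so a collar neighborhood $\partial\Lambda\times[0,\varepsilon)$ deformation retracts onto $\partial\Lambda$; hence the restriction of $m_\Lambda$ to the collar is determined by $m_\Lambda|_{\partial\Lambda}\in H^1(\partial\Lambda;\Z)$, and it suffices to show that the latter vanishes. I would use the standard fact that a phase function parametrizing $\Lambda$ trivializes the Maslov bundle over the locus where the signature of its Hessian in the frequency variables is locally constant; so the goal becomes the construction, near $\partial\Lambda$, of a nondegenerate phase function of locally constant signature.

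The construction exploits the boundary splitting from Section \ref{sec2}. Along $\partial\Lambda$ (that is, at $y_n=0$) the Jacobian $J(\chi)$ has the block form \eqref{eq:jacob}, whose tangential block is the symplectic matrix $J(\chi_\partial)$ of \eqref{eq:simplbound} and whose normal block is governed by $\partial_{y_n}x_n$, $\partial_{\eta_n}\xi_n$ with $\partial_{y_n}x_n\cdot\partial_{\eta_n}\xi_n=1$. This exhibits $T_\lambda\Lambda$, for $\lambda\in\partial\Lambda$, as a symplectic direct sum of a tangential Lagrangian plane (tangent to $\mathrm{graph}(\chi_\partial)'$) and a normal Lagrangian plane (tangent to the graph of the one-dimensional homogeneous symplectomorphism in the conormal variables). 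Correspondingly I would build a phase function near $\partial\Lambda$ whose frequency Hessian is block diagonal, with a tangential $(n-1)\times(n-1)$ block and a scalar normal block, so that its signature is additive; the Maslov class then splits as $m_\Lambda|_{\partial\Lambda}=m^{\mathrm{tan}}+m^{\mathrm{norm}}$.

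For the tangential contribution I would invoke Remark \ref{rem:psipartial}: since $\chi_\partial$ is the lift of the diffeomorphism $b$, its generating function $\psi_\partial(x',\xi')$ is linear in $\xi'$, so the phase $\psi_\partial(x',\xi')-\langle y',\xi'\rangle$ is nondegenerate with identically vanishing frequency Hessian; its signature is constantly $0$, whence $m^{\mathrm{tan}}=0$. This is the familiar fact that lifts of diffeomorphisms carry a trivial Maslov bundle, exactly as pseudodifferential operators do. For the normal contribution, the scalar normal Hessian is nonvanishing as a consequence of the nonvanishing of $\partial_{\eta_n}\xi_n$ (equivalently $\partial_{y_n}x_n$), which by the computation following \eqref{eq:simplbound} is bounded away from zero and of constant sign in a sufficiently small collar; hence the normal signature is locally constant and $m^{\mathrm{norm}}=0$. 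Combining, $m_\Lambda|_{\partial\Lambda}=0$, and by the retraction $M_\Lambda$ is trivial in a neighborhood of $\partial\Lambda$.

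I expect the main obstacle to be the rigorous justification of the symplectic splitting and of the additivity of the signature along $\partial\Lambda$: one must verify that the tangential and normal blocks in \eqref{eq:jacob} are symplectically orthogonal, so that the Gauss map into the Lagrangian Grassmannian factors up to homotopy as a tangential part times a one-dimensional part, and one must check that the globally defined, block-structured phase function built near $\partial\Lambda$ is genuinely nondegenerate on a single chart over the collar with locally constant signature. This last point reduces precisely to the constancy of sign of $\partial_{\eta_n}\xi_n$ established in Section \ref{sec2}, so the geometric input of the whole argument is already contained in the boundary analysis there.
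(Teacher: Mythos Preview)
The paper does not actually prove Lemma~\ref{lem:mas}: it states explicitly that ``the proof of the next Lemma~\ref{lem:mas} can be found in \cite{BCS14}''. So there is no proof here to compare your proposal against, and I can only assess your argument on its own terms and against the ingredients the paper makes available.

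Your strategy---reduce to $H^1(\partial\Lambda;\Z)$ via a collar retraction, then split tangential/normal using the block form \eqref{eq:jacob} and exploit that $\chi_\partial$ is the lift of a diffeomorphism---uses exactly the structural facts the paper isolates in Section~\ref{sec2}, and is a reasonable route. The gap you yourself flag is real: the Jacobian \eqref{eq:jacob} is block \emph{triangular}, not block diagonal (the entries $\partial_{y_n}x'$, $\partial_{y_n}\xi'$, $\partial_{y'}\xi_n$, $\partial_{\eta'}\xi_n$ need not vanish), so the tangent Lagrangian at $\partial\Lambda$ is not literally a symplectic direct sum, and the additivity $m_\Lambda|_{\partial\Lambda}=m^{\mathrm{tan}}+m^{\mathrm{norm}}$ requires a homotopy argument deforming those off-diagonal terms to zero while staying inside the Lagrangian Grassmannian. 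This can be done, but it is the genuine content of your proof.

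There is a more direct variant that sidesteps the splitting entirely and may well be closer to what \cite{BCS14} does. By Proposition~\ref{prop:loclag} the local phases all have the form $\phi=\psi(x,\xi)-\langle y,\xi\rangle$ with $n$ frequency variables, and the Maslov cocycle between two such parametrizations is governed by the difference of signatures of the Hessians $\psi''_{\xi\xi}$. On $\partial\Lambda$ (i.e.\ at $x_n=0$) one has $\psi(x',0,\xi)=\psi_\partial(x',\xi')$, which by Remark~\ref{rem:psipartial} is linear in $\xi'$ and independent of $\xi_n$; hence $\psi''_{\xi\xi}\equiv 0$ there, and every local phase has signature identically $0$ along $\partial\Lambda$. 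The Maslov cocycle is therefore trivial on $\partial\Lambda$, and since it is locally constant it remains trivial on a sufficiently small collar. This yields the same conclusion without needing to justify a symplectic direct-sum decomposition; the only input is the linearity of $\psi_\partial$ and the $\xi_n$-independence of $\psi$ at $x_n=0$, both of which are already established in Section~\ref{sec2}.
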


In order to define a suitable calculus for Fourier integral operators on manifolds with boundary, we need to introduce the transmission condition, see, e.g., \cite{BU71,GR87,GH90,RS85,SC01}. Consider 
the function spaces:
\[
H^+= \{\mathscr{F}(e^+u)\mid u \in  \Si(\R_+)\} \quad \mbox{ and } \quad H^-_0=\{\mathscr{F}(e^-u)\mid u \in \Si(\R_-)\},
\]
where $\Si(\R_\pm)= r^\pm \Si(\R)$ is the restriction of the Schwartz  functions on $\R$ to the right (left) half line,
$e^\pm$ is the extension by zero to $\R$ of a function defined on $\R_\pm$.
It is easy to prove that $H^+$ and $H^-_0$ are spaces of functions decaying of first order at infinity.
 Moreover, we denote by  $H'$ the set of all polynomials in one variable. Then we  define
\[
H= H^+ \oplus H^-_0 \oplus H'.
\]

\begin{Def}
\label{Def:trans}
Let $a \in S^m(\R^n \times \R^n \times \R^n)$. Then
$a$ satisfies  the transmission condition  at $x_n=y_n=0$ when, for all $k,l$,
\[
\partial^k_{y_n} \partial_{x_n}^l a(x', 0, y', 0, \xi', \langle \xi' \rangle \xi_n) \in S^m(\R^{n-1} \times 
\R^{n-1} \times \R^{n-1}) \hat{\otimes}_{\pi} H_{\xi_n}.
\]
We denote by $S^m_{\tr}(\R^{n} \times \R^n \times \R^n)$ the subset of symbols of order $m$ satisfying the transmission condition.
\end{Def}
\noindent
For symbols positively homogeneous of order $m$ with respect to  the $\xi$ variable, Definition \ref{Def:trans}
is equivalent to 
\begin{equation}
\label{eq:hom}
\partial_{x_n}^k \partial_{y_n}^l\partial_{\xi'}^\alpha \partial_{x'}^\beta a(x', 0, y', 0, 0,1)=
 (-1)^{m-|\alpha|} \partial_{x_n}^k \partial_{y_n}^l\partial_{\xi'}^\alpha \partial_{x'}^\beta a(x', 0, y', 0,0, -1)
\end{equation}
for all $k, l\in \N$, $\alpha, \beta  \in \N^{n-1}$. The above condition is often called symmetry condition;
the proof of the equivalence can be found, e.g., in \cite[Section 18.2]{HO03}. 
\begin{Def}[Admissible symplectomorphisms]\label{def:adm}
We say that $\chi$ is admissible, if all its components locally satisfy the transmission condition at the boundary. A phase function $\psi$ representing an admissible symplectomorphism will be called admissible. 
The first derivatives of $\psi$ are then homogeneous symbols, which satisfy the transmission condition.
\end{Def}

\begin{rem}
$($a$)$  Definition \ref{def:adm} has an invariant meaning: A change of coordinates in the cotangent bundle, induced by a change of coordinates in the base manifold, is linear with respect to the fibers.
Hence, if the transmission condition is satisfied in a local chart, it is also fulfilled after a change of coordinates. 

$($b$)$ When we refer to the phase function $\psi$ as a symbol then this is only correct 
after a modification near $|\xi|=0$. Modulo operators with smooth kernel, the precise form of this modification is irrelevant. 
\end{rem}

Lemma \ref{lem:chidelta} and Definition \ref{def:adm} imply some useful properties of the phase function $\psi$. First, $\psi^\prime_{\eta_n}(x', 0, \eta', \eta_n)$ is identically equal to zero for $\eta'\not=0$, hence 
$\psi(x', 0, \eta', \eta_n)$ does not depend on $\eta_n$ for $\eta'\not=0$. We set
\begin{equation}\label{eq:psidelta}
	\psi_\partial(x', \eta')=\psi(x', 0, \eta', \eta_n).  
\end{equation}
Note that $\psi_\partial$ is a generating function of the symplectomorphism $\chi_\partial\colon T^*\partial Y\setminus0\to T^*\partial X\setminus 0$ described in Remark \ref{rem:triv}. A further consequence of $\psi$ being a local phase function associated with a symplectomorphism which preserves the boundary is that $\psi_\partial(x^\prime,\eta^\prime)$ is linear in $\eta^\prime$, so that, in particular, $\psi(x^\prime,0,0,\eta_n)\equiv0$. Moreover, since the phase function is regular up to boundary
and $\xi_n=\psi^\prime_{x_n}(x,\eta)$, by \eqref{eq:jacob} and the subsequent considerations,
\begin{equation}\label{eq:nondeg}
	\partial_{\eta_n}\xi_n|_{y_n=0}=
	\psi^{\prime\prime}_{x_n\eta_n}(x^\prime(y^\prime,0,\eta^\prime,\eta_n),0,\eta^\prime,\eta_n)\not=0
	\Rightarrow \psi^{\prime\prime}_{x_n\eta_n}(x^\prime,0,\eta^\prime,\eta_n)\not=0.
\end{equation}
By continuity and compactness, the property remains true in a sufficiently small collar neighborhood of the boundary.

\section{A Class of Fourier Integral Operators\\on Manifolds with Boundary}
\label{sec3}
In this section we introduce the Fourier integral operators we are interested in and
describe their mapping properties, cf. \eqref{eq:map}.
%
Consider $A^\chi \in I^m_{\mathrm{comp}} (\widetilde{Y}, \widetilde{X}, \widetilde{\Lambda})$, where $\widetilde{\Lambda}= \textnormal{graph}(\tilde{\chi})'$. 
The definition implies that for all $(y_0, x_0, \eta_0, \xi_0)=\lambda_0 \in \widetilde{\Lambda}$, 
$A^\chi$ is, microlocally, a linear operator associated with a kernel which in local coordinates $(x,y)$ in the open set $\Omega_x \times \Omega_y$ parametrizing a 
neighborhood of $(x_0,y_0)$ is of the form
\[
k_{A^\chi}(x,y)=\int e^{i \phi(x',x_n,y', y_n, \xi',\xi_n )}a(x', x_n, y', y_n,\xi', \xi_n) \dbar \xi' \dbar \xi_n.
\]
We write $x=(x', x_n)$, $y=(y', y_n)$ with boundary defining functions  $x_n$ and $y_n$, respectively.
The phase function $\phi$ is defined in $\Omega_x \times \Omega_y \times \Gamma$ with $\Gamma $ open and conic in $\R^n \setminus \{0\}$, and the symbol $a(x, y,\xi)$ has support contained in $\Omega_x \times \Omega_y \times\Gamma\subset\R^{2n} \times \R^n$.
Proposition \ref{prop:loclag} implies that we can choose
\[
	\phi(x', x_n, y', y_n, \xi', \xi_n)= \psi(x', x_n, \xi', \xi_n)- \langle y', \xi'\rangle- 
	\langle y_n, \xi_n\rangle. 
\]

It is natural also to assume that $a$ has the transmission property with respect to $x_n=0$, $y_n=0$. In fact, this is necessary  to ensure that we obtain a continuous linear mapping $A^\chi\colon C^\infty(Y)\to C^\infty(X)$ even in the simpler case of pseudodifferential operators on $X$ (i.e., $Y=X$ and $\tilde{\chi}=id$). Note that, when computing the derivatives of $A^\chi u$, $u\in C^\infty(Y)$, close to $\partial X$, one needs to study expressions 
\[
	\iint e^{i \phi(x,y,\xi)}c(x,y,\xi) u(y) \dbar \xi dy,
\] 
where $c$ belongs to the span of symbols of the form
\begin{equation}\label{eq:amplder}
	\left[\prod_j (\partial^{\beta_j}_x\psi)(x,\xi) \right] \cdot (\partial^\gamma_x a)(x,y,\xi). 
\end{equation}
As a consequence of the assumptions on $\chi$ and $a$, the symbols of this type 
have the transmission property and 
$A^\chi$ maps $C^\infty(Y)$ continuously to 
$C^\infty(X)$, see \cite{BCS14}. 
The necessity of the conditions on $\chi$ and $a$ for this continuity property will be discussed elsewhere.

For simplicity, we will consider in the sequel $\psi$ defined on $\R^n \times \R^n \setminus \{0\}$.
The extension we choose is not relevant, since the symbol $a$ vanishes outside 
$\Omega_x\times \Omega_y\times \Gamma$. 
We can then focus on operators with kernel given by oscillatory integrals 
\begin{equation}
\label{eq:osciint}
\int e^{i(\psi(x', x_n, \xi', \xi_n) - y'\cdot \xi'- y_n \cdot \xi_n )}
 a(x',x_n, y', y_n,\xi', \xi_n) \dbar \xi_n \dbar\xi_n,
\end{equation}
with $\psi$ the generating function of an admissible symplectomorphism and $a$ a symbol of order $m$ with the transmission property. 
We stress the fact that the operator $A^\chi$ admits both a right and a left quantization, since the phase function represents locally a symplectomorphism. In particular, the symbol $a$ appearing in \eqref{eq:osciint} can be chosen independent of $y$, modulo a smoothing operator, see \cite{HO04}, Chapter 25.

For $u\in C^\infty(Y)$, supported close to $\partial Y$, $A^\chi u$ is then given, close to the boundary $\partial X$, by a finite sum of microlocal terms of the form
\begin{align*}
	\int &e^{i\psi(x,\xi)}a(x,\xi)\widehat{u}(\xi)\dbar\xi
	\\
	&=
	\int e^{i\psi_\partial(x',\xi')}\left[\int e^{i(\psi(x',x_n,\xi',\xi_n)-\psi_\partial(x',\xi'))}a(x',x_n,\xi',\xi_n)
	\widehat{u}(\xi',\xi_n)\dbar\xi_n\right]\dbar\xi',
\end{align*}
modulo operators with smoothing kernel. This holds in view of the fact that the Maslov bundle is here trivial and all phase functions are equivalent. 
Using for example the results of \cite{LSV94}, it is possible to find a global phase function in a small neighborhood of $x_n=0$, so the sum can actually be reduced to a single term. We can then interpret the operator $A^\chi$ as an \textit{operator-valued Fourier integral operator}, locally defined on the half-space $\R^n_+$, in analogy with \cite{SC01,SC98}, and focus on the operators of the form \eqref{eq:opan}, that is, on the action in the normal direction $A_n^\chi$.

This allowed us to introduce in \cite{BCS14} the class of \textit{Fourier integral operators of Boutet de Monvel type}, of which we now shortly recall the local definition. 
For $\s=(s_1,s_2)\in \R^2$ we introduce the weighted Sobolev spaces 
%
\[
	H^\s (\R^n) = H^{s_1,s_2}(\R^n)=\norm{x}^{s_2}H^{s_1}(\R^n)
\]
with the usual (unweighted) Sobolev space $H^{s_1}(\R^n)$. 
The corresponding spaces on $\R^n_+$ are obtained by restriction, and endowed with the natural topology. We set 
\[
 \partial_+= r^+ \partial_{x_n}e^+: H^{s_1, s_2}(\R_+) \to H^{s_1-1, s_2}(\R_+), \quad s_1> -\frac{1}{2}.
\]
One can consider the operator $\partial_+$ as an operator-valued symbol belonging to
$S^{1}(\R^{n-1}, \R^{n-1}; H^{\s}(\R_+), H^{\s-(1,0)}(\R_+))$. Let us now recall the definition of
local \emph{potential symbols}, \emph{trace symbols}, \emph{singular Green symbols}:
\begin{itemize}
 \item [i)] A \emph{potential symbol} of order $m$ is an element of
\[
 S^{m}(\R^{n-1}, \R^{n-1}; \C, \Si(\R_+))= \proj_\mathbf{s}S^m(\R^{n-1}, \R^{n-1}; \C, H^\mathbf{s}(\R_+)).
\]
\item[ii)] A \emph{trace symbol} of order $m$ and type zero is an element of the set
\[
 S^m(\R^{n-1}, \R^{n-1}; \Si'(\R_+), \C)=\proj_\mathbf{s}S^m(\R^{n-1}, \R^{n-1}; H^\mathbf{s}_0
 (\overline{\R}_+), \C).
\]
Clearly, a trace symbol of order $m$ and type zero defines also a symbol 
in $S^{m}(\R^{n-1}, \R^{n-1};H^\mathbf{s}(\R_+), \C)$, if $s_1>-\frac{1}{2}$. 
A \emph{trace symbol} of type $d$ is a sum of the form
\[
 t=\sum_{j=0}^d t_j \partial_+^j, \quad t_j \in S^{m-j}(\R^{n-1}, \R^{n-1}; \Si'(\R_+), \C).
\]
Then $t$ is in $S^{m}(\R^{n-1}, \R^{n-1}; H^\mathbf{s}(\R_+), \C)$ for $s_1> d- \frac{1}{2}$.
\item[iii)] A \emph{singular Green symbol} of order $m$ and type zero is an element of
\[
S^m(\R^{n-1}, \R^{n-1}; \Si'(\R), \Si(\R_+))= 
\limproj_\mathbf{s} S^m(\R^{n-1}, \R^{n-1}; H_0^\mathbf{s}
(\overline{\R}_+),H^\mathbf{s}(\R_+) ).
\]
A singular Green symbol of order $m$ and type zero gives a symbol in $S^{m}(\R^{n-1},$
$\R^{n-1}; H^\mathbf{s}(\R_+), \Si(\R_+))$, provided $s_1>-\frac{1}{2}$. A \emph{singular Green symbol} of  order $m$ and type $d$ is a sum of the  form
\[
 g=\sum_{j=0}^d g_j \partial_+^j, \quad g_j \in S^{m-j}(\R^{n-1}, \R^{n-1}; \Si'(\R_+), \Si(\R_+)).
\]
We find that $g$ is in $S^m(\R^{n-1}, \R^{n-1}; H^\mathbf{s}(\R_+), \Si(\R_+))$ for $s_1> d- \frac{1}{2}$.
\end{itemize}

\begin{rem}
\label{rem:dirac}
It is well known that the trace operator $\gamma_j$, given by $(\gamma_ju)(y')=(\partial_{y_n}^ju)(y',0)$, is a trace symbol of order $j+\frac{1}{2}$ and type $j+1$,
see \cite{SC01}.
\end{rem}

With the notation introduced above we can now define the relevant operator class:
\begin{Def} 
\label{def:Fourier integral operatorBdM}
We denote by  $\mathscr{B}_\chi^{m,d}(X, Y)$ the class of all operators 
\[
 \mathcal{A}:=\left(
\begin{array}{cc}
r^+ A^\chi e^+ + G^{{\chi}_\partial} & K^{\chi_\partial}\\
 T^{\chi_{\partial}}& S^{\chi_\partial}
\end{array} \right) 
: \begin{array}{c}
 C^\infty(Y)\\
\oplus\\
 C^\infty(\partial Y)
\end{array}
\to 
\begin{array}{c}
 C^\infty(X)\\
\oplus\\
 C^\infty(\partial X)
\end{array} .
\]
Here $A^\chi \in I^m_{\mathrm{comp}}(\widetilde{X}, \widetilde{Y}, \widetilde{\Lambda})$
is as defined above. 
Modulo operators with smooth kernel in the interior, the other entries are described as follows: 
$G^{\chi_\partial}$ is a Fourier integral operator with Lagrangian submanifold defined by 
$\mathrm{graph}(\chi_{\partial})'$ and local singular Green symbol $g$
 of order $m$ and type $d$;
$K^{\chi_\partial}$ is a Fourier integral operator with Lagrangian submanifold defined by 
$\mathrm{graph}(\chi_{\partial})'$ and local potential symbol $k$ of  order $m$;
$T^{\chi_\partial}$ is a Fourier integral operator with Lagrangian submanifold  defined by 
$\mathrm{graph}(\chi_{\partial})'$ and local trace symbol $t$ of  order $m$ and type $d$;
$S^{\chi_\partial}$ is a Fourier integral operator with Lagrangian submanifold  defined by 
$\mathrm{graph}(\chi_{\partial})'$ and local symbol $s \in S^{m}(\R^{n-1}, \R^{n-1})$. 
\end{Def}

\section{Proof of Theorems \ref{thm:main} and \ref{thm:main2}}\label{sec:pf}

We now prove the main results of the paper, stated in the Introduction.

\begin{proof}[Proof of Theorem \ref{thm:main}]
Both $\psi$ and $\psi_\partial$ are H\"ormander symbols belonging to $S^1_{1,0}(\Omega\times\R^n)$ and $S^1_{1,0}(\Omega^\prime\times\R^{n-1})$, respectively.
positively homogeneous of degree one in the covariable (outside the zero-section). Moreover, since $\psi$ is a (local) phase function associated with a
symplectomorphism which preserves the boundary of the underlying manifolds, we have $\psi_{\xi_n}^\prime(x^\prime,0,\xi^\prime,\xi_n)=0$
for any $(x^\prime,\xi^\prime,\xi_n)$. In particular, then, as recalled in Section \ref{sec2}, $\psi(x^\prime,0,\xi^\prime,\xi_n)=\psi_\partial(x^\prime,\xi^\prime)$ 
does not depend on $\xi_n$. This implies, additionally, that not only $\varphi(x^\prime,0,\xi^\prime,\xi_n)=0$ for any $(x^\prime,\xi^\prime,\xi_n)$,
but also that $\partial^\alpha_{\xi_n}\varphi(x^\prime,0,\xi^\prime,\xi_n)=\partial^\alpha_{\xi_n}\psi(x^\prime,0,\xi^\prime,\xi_n)=0$ for any $\alpha\in\Z_+\setminus\{0\}$ and any $(x^\prime,\xi^\prime,\xi_n)$.
As a consequence, for any $\alpha\in\Z_+$ and any $(t,\tau)\in\R^2$, $t\not=0$, 
$(x^\prime,\xi^\prime)\in \Omega^\prime\times(\R^{n-1}\setminus\{0\})$,
there exists $\theta$ between $0$ and $t$ such that 
%
\begin{align*}
	\omega\left(\frac{t}{\norm{\xi^\prime}}\right)
	&\left|\partial^\alpha_{\tau}
	\left[\varphi\left(x^\prime,\frac{t}{\norm{\xi^\prime}},\xi^\prime,\tau\norm{\xi^\prime}\right)\right]\right|
	\\
	=&\;\omega\left(\frac{t}{\norm{\xi^\prime}}\right)
	\norm{\xi^\prime}^{\alpha}
	\left|(\partial^\alpha_{\xi_n}\varphi)\left(x^\prime,\frac{t}{\norm{\xi^\prime}},\xi^\prime,\tau\norm{\xi^\prime}\right)
	-
	(\partial^\alpha_{\xi_n}\varphi)\left(x^\prime,0,\xi^\prime,\tau\norm{\xi^\prime}\right)
	\right|
	\\
	=&\;\omega\left(\frac{t}{\norm{\xi^\prime}}\right)\dfrac{|t|}{\norm{\xi^\prime}}\cdot\norm{\xi^\prime}^{\alpha}
	\left|(\partial_{x_n}\partial^\alpha_{\xi_n}\varphi)\left(x^\prime,\frac{\theta}{\norm{\xi^\prime}},\xi^\prime,\tau\norm{\xi^\prime}\right)\right|
	\\
	\lesssim&\; \norm{t}\norm{\xi^\prime}^{\alpha-1}\norm{(\xi^\prime,\tau\norm{\xi^\prime})}^{1-\alpha}
	\le\norm{t}\norm{\tau}^{1-\alpha},
\end{align*}
that is, for any $(t,\tau)\in\R^2$,

\vspace{-1mm}

\begin{equation}
	\label{eq:2}
	\omega\left(\frac{t}{\norm{\xi^\prime}}\right)\left|\partial^\alpha_{\tau}
	\left[\varphi\left(x^\prime,\frac{t}{\norm{\xi^\prime}},\xi^\prime,\tau\norm{\xi^\prime}\right)\right]\right|
	\lesssim\norm{t}\norm{\tau}^{1-\alpha},
\end{equation}

\vspace{-1mm}

\noindent
with constants independent of $(x^\prime,\xi^\prime)\in U^\prime\times(\R^n\setminus\{0\})$, $U^\prime\subset\subset\Omega^\prime$.

Remember that, in view of the hypotheses and the properties of $\psi$ deduced in Section \ref{sec2},
$\psi(x^\prime,0,\xi^\prime,\xi_n)=\psi_\partial(x^\prime,\xi^\prime)$ is linear in $\xi^\prime$, so that, in particular, $\psi(x^\prime,0,0,\xi_n)\equiv0$.
Due to homogeneity, we have near the boundary
\begin{align*}
	\psi(&x^\prime,x_n,0,\xi_n)
	\\
	&=x_n\psi^\prime_{x_n}(x^\prime,0,0,\xi_n)+x_n^2\int_0^1(1-s)\psi^{\prime\prime}_{x_n x_n}(x^\prime,sx_n,0,\xi_n)ds
	\\
	&=\left[\pm x_n\psi^\prime_{x_n}(x^\prime,0,0,\pm1)\pm x_n^2\int_0^1(1-s)\psi^{\prime\prime}_{x_n x_n}(x^\prime,sx_n,0,\pm1)ds\right]\xi_n,
	\; \xi_n\gtrless 0.
\end{align*}
Differentiating with respect to $x_n$ shows that 
\begin{equation}
	\label{eq:5}
	\psi^\prime_{x_n}\left(x^\prime,x_n,0,\xi_n\right)
	=
	[q^\pm(x^\prime)+ x_n r^\pm(x)]\xi_n,\;\xi_n\gtrless0,
\end{equation}
with $q^\pm\in C^\infty(\Omega^\prime)$, $r^\pm\in C^\infty(\Omega)$. 
As $\psi$ satisfies the transmission condition, \eqref{eq:hom} implies that 
$$q^+(x') = -q^-(x').$$
More is true: Since $\psi$ is a regular phase function,
we know from \eqref{eq:nondeg} that $\psi^{\prime\prime}_{x_n\xi_n}(x,\xi)\not=0$ everywhere on $\Omega\times(\R^n\setminus\{0\})$.
Without loss of generality we can assume it to be positive 
everywhere on $\Omega\times(\R^n\setminus\{0\})$.
As a consequence
$$q^\pm(x^\prime)=\pm\psi^{\prime\prime}_{x_n\xi_n}(x^\prime,0,0,\xi_n)\gtrless0, \xi_n\gtrless0, x^\prime\in\Omega^\prime.$$
Let $U'\subset\subset \Omega'$. 
Then there exists a $\kappa>0$ such that 
$$|q^\pm(x^\prime)|\ge4\kappa\text{ for }x^\prime\in U^\prime.$$ 
By continuity and the compactness of $U^\prime$, there exist $k>0$ and $\rho>0$ sufficiently small such that 
\[
	\pm\psi^\prime_{x_n}\left(x^\prime,x_n,\xi^\prime,\pm1\right)\ge\kappa>0 \text{ for }
	x^\prime\in U^\prime, |x_n|\le k, |\xi^\prime|\le\rho.
\]
For convenience we will assume in the sequel that we can take $k=1=\rho$. 
For $(t,\tau)\in\R^2$ we then obtain
\begin{equation}
	\label{eq:6}
	\begin{aligned}
	\psi^\prime_{x_n}\left(x^\prime,\frac{t}{\norm{\xi^\prime}},\frac{\xi^\prime}{\norm{\xi^\prime}},\tau\right)=
	\pm\psi^\prime_{x_n}\left(x^\prime,\frac{t}{\norm{\xi^\prime}},\frac{\xi^\prime}{|\tau|\norm{\xi^\prime}},\pm1\right)\tau\ge\kappa|\tau|>0
	\end{aligned}
\end{equation}
for $x^\prime\,\in U^\prime, \xi^\prime\in\R^{n-1}\setminus\{0\}, 
\frac{|t|}{\norm{\xi^\prime}}\le 1, |\tau|\ge1$. 
\smallskip
We now fix the cut-off function $\omega$, define $\sphi$ as in \eqref{eq:1} and check conditions (P1), (P2),  and (P3) on a regular SG phase function. 

Ad (P1). For any choice of $a,\alpha\in\Z_+$, $\partial^a_t\partial^\alpha_\tau \sphi(t,\tau)$ 
is a linear combination of terms of the form 
\begin{eqnarray*}
S_j &=&\omega^{(j)}\left(\frac{t}{\norm{\xi^\prime}}\right)\norm{\xi^\prime}^{\alpha-a}\cdot
	(\partial^{a-j}_{x_n}\partial^{\alpha}_{\xi_n}\varphi)\left(x^\prime,\frac{t}{\norm{\xi^\prime}},\xi^\prime,\tau\norm{\xi^\prime}\right),\text{ and}\\
T_j&=&K\left[1-\omega\left(\frac{t}{\norm{\xi^\prime}}\right)\right]^{(j)}_t\cdot\partial^{a-j}_t\partial^\alpha_\tau(t\cdot\tau), 
\end{eqnarray*}
$\ j=0,\ldots, a.$
%
%
The summands $S_j$, $j=0,\dots,a$, can be estimated as follows.
\begin{align}
				|S_j|&=\left|\omega^{(j)}\!\!\left(\frac{t}{\norm{\xi^\prime}}\right)\right|\norm{\xi^\prime}^{\alpha-a}
				\cdot\left|(\partial^{a-j}_{x_n}\partial^{\alpha}_{\xi_n}\varphi)\left(x^\prime,\frac{t}{\norm{\xi^\prime}},\xi^\prime,\tau\norm{\xi^\prime}\right)\right|
				\nonumber
				\\
				&\lesssim
				\left|\omega^{(j)}\!\!\left(\frac{t}{\norm{\xi^\prime}}\right)\right| 
				\norm{\xi^\prime}^{\alpha-a}\cdot\norm{(\xi^\prime,\tau\norm{\xi^\prime})}^{1-\alpha}\nonumber \\
				&=
				\left|\omega^{(j)}\!\!\left(\frac{t}{\norm{\xi^\prime}}\right)\right| 
				\norm{\xi^\prime}^{1-a}\cdot\norm{\tau}^{1-\alpha}.
				\label{eq:sj}
\end{align}
\begin{itemize}
\item For $j>0$ we note that on $\mathrm{supp}\, \omega^{(j)}(t/\norm{\xi'})$ we have 
$\norm{\xi'}\sim \norm{t}$, so that we  can estimate $S_j$ by 
$\norm{t}^{1-a}\norm{\tau}^{1-\alpha}$, as asserted.   
\item For $j=0$ and $a=0$, the required estimate is given by \eqref{eq:2}.
\item For $j=0$ and $a=1$, the estimate is trivial from \eqref{eq:sj}.
\item For $j=0$ and $a>1$ it suffices to show that 
$$\omega(t/\norm{\xi'}) \lesssim \frac{\norm{\xi'}^{1-a}}{\norm{t}^{1-a}} .
$$ 
This is clear for $\norm{\xi'}\ge\norm t$. As $\omega(t)$ vanishes for $|t|\ge1$, 
it remains to consider the case, where 
$$|t|\le \norm{\xi'}\le \norm t.$$
But then $\norm t \le 1+|t| \le 2\norm{\xi'}\le 2 \norm t,$ and the assertion also follows.
\end{itemize}
The summands $T_j$, $j=0,\dots,a$, are given by linear combinations of terms of 
the form 
$$ K \ \partial_t^j\left(1-\omega\left(\frac t{\norm{\xi'}}\right)\right)\partial _t^{a-j}t\, \partial^\alpha_\tau \tau.$$
So it remains to check that 
$$ \partial_t^j\left(1-\omega\left(\frac t{\norm{\xi'}}\right)\right) \partial _t^{a-j}t 
\lesssim \norm t^{1-a}.
$$
\begin{itemize}
\item For $j=0$ there is nothing to show. 
\item For $j>0$, we note again that on $\mathrm{supp}\, \omega^{(j)}(t/\norm{\xi'})$ we have $\norm{\xi'}\sim \norm{t}$, so that we obtain the desired estimate.
\end{itemize}	%
%

\noindent
Ad (P2). The estimates from above, with constant $C>0$ independent of $(x^\prime,\xi^\prime)\in U^\prime\times(\R^{n-1}\setminus\{0\})$, are special cases of the considerations for (P1) with $a=1, \alpha=0$ and $a=0,\alpha=1$, respectively.
Let us prove that the two estimates from below hold, provided we choose $K$ large enough.
\begin{enumerate}
	\item[ i)] From the homogeneity and the properties of $\varphi$ and $\psi$ explained above, we obtain
	\begin{align*}
		\sphi^\prime_\tau(t,\tau)&=
		\omega\left(\frac{t}{\norm{\xi^\prime}}\right)\cdot
					\varphi^\prime_{\xi_n}\left(x^\prime,\frac{t}{\norm{\xi^\prime}},\xi^\prime,\tau\norm{\xi^\prime}\right)
					+\left[1-\omega\left(\frac{t}{\norm{\xi^\prime}}\right)\right]K\cdot t
		\\
		&=
		\omega\left(\frac{t}{\norm{\xi^\prime}}\right)\cdot
					\psi^\prime_{\xi_n}\left(x^\prime,\frac{t}{\norm{\xi^\prime}},\frac{\xi^\prime}{\norm{\xi^\prime}},\tau\right)\norm{\xi^\prime}
					+\left[1-\omega\left(\frac{t}{\norm{\xi^\prime}}\right)\right]K\cdot t.
	\end{align*}
As $\psi'_{\xi_n}(x',0,\xi)=0$ for all $\xi\not=0$ this term vanishes for $t=0$; for $t\not=0$
we find $\theta$ between $0$  and  $t/\norm{\xi^\prime}$ such that 
\begin{align*}
		\sphi^\prime_\tau(t,\tau)&=
		\displaystyle
		\left\{
		\omega\left(\frac{t}{\norm{\xi^\prime}}\right)\cdot
					\psi^{\prime\prime}_{x_n\xi_n}\left(x^\prime,\theta,\frac{\xi^\prime}{\norm{\xi^\prime}},\tau\right)
					+\left[1-\omega\left(\frac{t}{\norm{\xi^\prime}}\right)\right]K\right\}\cdot t
\end{align*}
	%
	We will now study the coefficient of $t$,
	\[
		A=\omega\left(\frac{t}{\norm{\xi^\prime}}\right)\cdot
					\psi^{\prime\prime}_{x_n\xi_n}\left(x^\prime,\theta,\frac{\xi^\prime}{\norm{\xi^\prime}},\tau\right)
					+\left[1-\omega\left(\frac{t}{\norm{\xi^\prime}}\right)\right]K
	\]
and show that $A\ge c_1$ for some $c_1>0$. For this, however, it is sufficient to note that $A$ is a convex combination of two positive quantities bounded away from zero, namely 
$\psi^{\prime\prime}_{x_n\xi_n}\left(x^\prime,\theta,\frac{\xi^\prime}{\norm{\xi^\prime}},\tau\right)$, cf. \eqref{eq:6}  and $K$. 

	\item[ii)] In view of  the properties of $\psi$ and $\varphi$,
		\begin{align*}
		\sphi^\prime_t(t,\tau)&=
		\omega^\prime\left(\frac{t}{\norm{\xi^\prime}}\right)\norm{\xi^\prime}^{-1}\left[
					\varphi\left(x^\prime,\frac{t}{\norm{\xi^\prime}},\xi^\prime,\tau\norm{\xi^\prime}\right)-K\cdot t\cdot\tau\right]
		\\
		&+
		\omega\left(\frac{t}{\norm{\xi^\prime}}\right)\cdot
					\psi^\prime_{x_n}\left(x^\prime,\frac{t}{\norm{\xi^\prime}},\xi^\prime,\tau\norm{\xi^\prime}\right)\norm{\xi^\prime}^{-1}
					+\left[1-\omega\left(\frac{t}{\norm{\xi^\prime}}\right)\right]K\cdot \tau
		\\
		&=
		\omega^\prime\left(\frac{t}{\norm{\xi^\prime}}\right)\left[
					\varphi\left(x^\prime,\frac{t}{\norm{\xi^\prime}},\frac{\xi^\prime}{\norm{\xi^\prime}},\tau\right)-
					K\cdot \frac{t}{\norm{\xi^\prime}}\cdot\tau\right]
		\\
		&+
		\omega\left(\frac{t}{\norm{\xi^\prime}}\right)\cdot
					\psi^\prime_{x_n}\left(x^\prime,\frac{t}{\norm{\xi^\prime}},\frac{\xi^\prime}{\norm{\xi^\prime}},\tau\right)
					+\left[1-\omega\left(\frac{t}{\norm{\xi^\prime}}\right)\right]K\cdot \tau.
		\end{align*}
		It is enough to focus on the case $|\tau|\ge1$, since, when $|\tau|\le1$, trivially,
		\[
			\norm{\sphi^\prime_t(t,\tau)}\ge1>\frac{1}{2}\norm{\tau}.
		\]
		Then, writing, for $|\tau|\ge1$,
		\begin{align*}
		\sphi^\prime_t(t,\tau)=
		&\left\{\omega^\prime\left(\frac{t}{\norm{\xi^\prime}}\right)\left[
					\pm\varphi\left(x^\prime,\frac{t}{\norm{\xi^\prime}},\frac{\xi^\prime}{|\tau|\norm{\xi^\prime}},\pm1\right)-
					K\cdot \frac{t}{\norm{\xi^\prime}}\right]\right.
		\\
		&\left.\pm
		\omega\left(\frac{t}{\norm{\xi^\prime}}\right)\cdot
					\psi^\prime_{x_n}\left(x^\prime,\frac{t}{\norm{\xi^\prime}},\frac{\xi^\prime}{|\tau|\norm{\xi^\prime}},\pm1\right)
					+\left[1-\omega\left(\frac{t}{\norm{\xi^\prime}}\right)\right]K\right\} \tau,
		\end{align*}
		we  analyze the coefficient of $\tau$
		\begin{align*}
			B&=\omega^\prime\left(\frac{t}{\norm{\xi^\prime}}\right)\left[
					\pm\varphi\left(x^\prime,\frac{t}{\norm{\xi^\prime}},\frac{\xi^\prime}{|\tau|\norm{\xi^\prime}},\pm1\right)-
					K\cdot \frac{t}{\norm{\xi^\prime}}\right]
			\\
			&\pm
					\omega\left(\frac{t}{\norm{\xi^\prime}}\right)\cdot
					\psi^\prime_{x_n}\left(x^\prime,\frac{t}{\norm{\xi^\prime}},\frac{\xi^\prime}{|\tau|\norm{\xi^\prime}},\pm1\right)
					+\left[1-\omega\left(\frac{t}{\norm{\xi^\prime}}\right)\right]K,
		\end{align*}
		and show $B\ge c_2$ with a constant $c_2>0$, provided that $K>0$ is chosen large enough.

For $t/\norm{\xi'}\ge1$ or $t/\norm{\xi'}\le 1/2$, $B$ is uniformly bounded away from zero in view of the positivity of $K$ and \eqref{eq:6}. This positivity extends, with a uniform lower
bound, to the case of slightly smaller and larger values of $t/\norm{\xi'}$. 
So it remains to consider the case where, for some $\varepsilon>0$,  
$$  \frac12+\varepsilon\le \frac t {\norm{\xi'}} \le 1-\varepsilon.$$ 
On this set,  $\omega(t/\norm{\xi'})\le1-\varepsilon_1$ for some $\varepsilon_1>0$. 
We note that  $\omega'(s)s$ is non-negative and rewrite 
$$B= K\left(\left(1-\omega\left(\frac t{\norm{\xi'}}\right)\right) 
-\omega'\left(\frac t{\norm{\xi'}}\right)\frac t{\norm{\xi'}}\right) +r,
$$ 
where the rest $r$ is bounded and independent of $K$. By making $K$ large, we thus obtain the	positivity of $B$.
\end{enumerate}

\noindent
Ad (P3). We have to estimate from below
\begin{equation}
	\label{eq:3}
	\begin{aligned}
	\sphi^{\prime\prime}_{t\tau}(t,\tau)= \ &
		\omega^\prime\left(\frac{t}{\norm{\xi^\prime}}\right)\norm{\xi^\prime}^{-1}\left[
		\varphi^\prime_{\xi_n}\!\!\left(x^\prime,\frac{t}{\norm{\xi^\prime}},\xi^\prime,\tau\norm{\xi^\prime}\right)\norm{\xi^\prime}-K\cdot t
		\right]
	\\
	&\ 
	+\, \omega\left(\frac{t}{\norm{\xi^\prime}}\right)
	\varphi^{\prime\prime}_{x_n\xi_n}\!\!\left(x^\prime,\frac{t}{\norm{\xi^\prime}},\xi^\prime,\tau\norm{\xi^\prime}\right)
	+K\left[1-\omega\left(\frac{t}{\norm{\xi^\prime}}\right)\right]
	\\
		&=\omega^\prime\left(\frac{t}{\norm{\xi^\prime}}\right)\left[
		\varphi^\prime_{\xi_n}\!\!\left(x^\prime,\frac{t}{\norm{\xi^\prime}},\frac{\xi^\prime}{\norm{\xi^\prime}},\tau\right)-K\cdot \frac{t}{\norm{\xi^\prime}}
		\right]
	\\
	&\  +\omega\left(\frac{t}{\norm{\xi^\prime}}\right)
	\varphi^{\prime\prime}_{x_n\xi_n}\!\!\left(x^\prime,\frac{t}{\norm{\xi^\prime}},\frac{\xi^\prime}{\norm{\xi^\prime}},\tau\right)
	+K\left[1-\omega\left(\frac{t}{\norm{\xi^\prime}}\right)\right],
	\end{aligned}
\end{equation}
where we have used homogeneity in the last equality. 

We now first note that  
$\varphi''_{x_n\xi_n}\!\!\left(x^\prime,\frac{t}{\norm{\xi^\prime}},\xi^\prime,\tau\norm{\xi^\prime}\right)>0$ is bounded away from zero on $U'\times(\R^n\setminus 0)$. 
Indeed, for $|\tau|\le1$ this follows from the positivity of $\varphi''_{x_n\xi_n}$ and the fact that the argument then varies over a bounded set. For $|\tau|\ge 1$ we use the zero-homogeneity of $\varphi''_{x_n\xi_n}$. 
With this in mind, the sum of the last two terms on the right hand side is seen to be bounded away from zero as a convex combination.
In view of the fact that $s\mapsto \omega'(s)s$ is bounded and everywhere non-negative, the first summand will be positive for large $K$.   
This shows the assertion. 	 

\noindent
The proof is complete.
\end{proof}

Before proving Theorem \ref{thm:main2}, we introduce a class of functions which will be useful in the sequel.
\begin{Def}
 \label{def:BS}
 A function $a \in C^{\infty}(\R^{n-1}_{x'} \times \R^{n-1}_{\xi'} \times \R_{x_n} \times \R_{\xi_n})$ belongs to the set
 $BS^{m}(\R^{n-1}, \R^{n-1}; S^{l}(\R))$ 
if, for all $\alpha, \beta \in \N^{n-1}$, and fixed $(x', \xi')$
\[
 \left(\partial_{\xi'}^\alpha \partial_{x'}^\beta a \right)\left(x', \frac{x_n}{\langle \xi' \rangle}, \xi', \xi_n \langle \xi' \rangle\right)  \in S^{l}(\R_{x_n} \times \R_{\xi_n})
\]
and each seminorm can be estimated by $\langle \xi' \rangle^{m-|\alpha|}$. That is, for all $\gamma, \delta$ and compact $K\subseteq\R^n$ there exists
a constant $C_{\gamma, \delta, K}$ such that for all $(x', x_n) \in K$

\[
  \left|\partial_{\xi_n}^{\gamma}\partial_{x_n}^{\delta}\left(\left(\partial_{\xi'}^\alpha \partial_{x'}^\beta a \right)
  \left(x', \frac{x_n}{\langle \xi' \rangle}, \xi', \xi_n \langle \xi' \rangle\right)\right)\right| \leq C_{\gamma, \delta, K} 
 \langle \xi_n \rangle^{l-|\gamma|} \langle \xi' \rangle^{m-|\alpha|} .
\]
\end{Def}

\begin{rem}\label{smbs}
 If $a \in S^m(\R^n, \R^n)$, then
 $a \in BS^m(\R^{n-1},$ $\R^{n-1},$ $S^m(\R))$. This a consequence of the fact that
 $\partial_{\xi'}^{\alpha}\partial_{x'}^{\beta}  a \in S^{m-|\alpha|}(\R^n, \R^n)$
 and of the direct computation
 \begin{equation}
  \label{vero}
  \left|\partial_{\xi_n}^\gamma \partial_{x_n}^\delta a\left(x', \frac{x_n}{\langle \xi' \rangle},
  \xi', \xi_n\langle \xi' \rangle \right)\right|\leq
  C \langle \xi'\rangle^{m} \langle \xi_n \rangle^{m- |\gamma|}, 
 \end{equation}
valid for any $a \in S^m(\R^n, \R^n)$. Moreover, it is clear that $BS$-spaces satisfy a multiplicative property, that is
 \begin{equation} 
 \label{eq:prodbs}
 \begin{aligned}
 BS^{m}(\R^{n-1}, \R^{n-1}; S^l(\R)) \cdot &BS^{m'}(\R^{n-1}, \R^{n-1}; S^{l'}) \subseteq
 \\
\subseteq &BS^{m+m'}(\R^{n-1}, \R^{n-1}, S^{l+l'}(\R)).
 \end{aligned}
 \end{equation}
\end{rem} 
\begin{Lem}
\label{Lem:induc}
Let $a \in S^m(\R^n \times \R^n)$ be a symbol vanishing for $|\xi'|=|\xi_n|=0$ and $\psi$ be a phase function which represents locally at $x_n=0$
an admissible symplectomorphism. Then
\begin{eqnarray}\label{induc}
\lefteqn{\partial_{\xi'}^{\alpha}\partial_{x'}^\beta 
\left(e^{i \psi(x', x_n, \xi', \xi_n)-i \psi_{\partial} (x', \xi')}a(x', x_n,\xi', \xi_n)\right)}
\nonumber\\ 
&=& e^{i \psi(x', x_n, \xi', \xi_n)- i\psi_{\partial}(x', \xi')} \tilde{a}(x', x_n, \xi', \xi_n),
\end{eqnarray}
where $\tilde{a}\left(x', x_n, \xi', \xi_n\right)
 \in BS^{m-|\alpha|}(\R^{n-1}, \R^{n-1}; S^{m+|\beta|}(\R))$.
\end{Lem}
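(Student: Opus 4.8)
The plan is to argue by induction on the total tangential order $N=|\alpha|+|\beta|$, peeling off one derivative at a time. The point is that when one applies $\partial_{x'_j}$ (resp.\ $\partial_{\xi'_j}$) to an expression $e^{i\varphi}\,b$, with $\varphi(x,\xi)=\psi(x,\xi)-\psi_\partial(x',\xi')$, the Leibniz and chain rules merely reproduce the factor $e^{i\varphi}$ and replace the amplitude $b$ by $i(\partial_{x'_j}\varphi)\,b+\partial_{x'_j}b$ (resp.\ $i(\partial_{\xi'_j}\varphi)\,b+\partial_{\xi'_j}b$). So everything reduces to the $BS$-multiplier behaviour of $\partial_{x'_j}\varphi$ and $\partial_{\xi'_j}\varphi$, together with the product property \eqref{eq:prodbs} and the elementary fact, read off directly from Definition \ref{def:BS}, that $BS^\mu(\R^{n-1},\R^{n-1};S^l(\R))$ is stable under $\partial_{x'_j}$ while $\partial_{\xi'_j}$ sends it into $BS^{\mu-1}(\R^{n-1},\R^{n-1};S^l(\R))$. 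As in Sections \ref{sec2}--\ref{sec3} we treat $\psi$ as a fixed element of $S^{1}_{1,0}(\R^n\times\R^n)$ and $\psi_\partial(x',\xi')$ as an element of $S^{1}_{1,0}$ that is linear in $\xi'$; the behaviour near $\xi=0$ is immaterial since $a$ vanishes there.

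\textbf{The core estimates.} The heart of the proof is the pair of statements
\begin{equation}\label{pl:phi}
	\partial_{x'}^\delta\varphi\in BS^{0}(\R^{n-1},\R^{n-1};S^1(\R))\quad\text{and}\quad
	\partial_{\xi'_j}\varphi\in BS^{-1}(\R^{n-1},\R^{n-1};S^0(\R)),
\end{equation}
valid for every $\delta\in\N^{n-1}$ and every $j$, which I would establish first. Both rest on the identity $\varphi(x',0,\xi',\xi_n)\equiv0$ recalled in Section \ref{sec2}: differentiating it tangentially shows that $\partial_{x'}^\delta\partial_{\xi'}^\gamma\varphi$ vanishes at $x_n=0$ for all $\gamma,\delta$, so Hadamard's lemma, together with $\partial_{x_n}\psi_\partial=0$, yields
\[
	\partial_{x'}^\delta\partial_{\xi'}^\gamma\varphi(x,\xi)=x_n\int_0^1(\partial_{x'}^\delta\partial_{\xi'}^\gamma\partial_{x_n}\psi)(x',s x_n,\xi',\xi_n)\,ds.
\]
Since $\partial_{x_n}\psi$ is positively homogeneous of degree $1$ in $\xi$, the integrand and hence the integral lie in $S^{1-|\gamma|}(\R^n\times\R^n)$, so the scaling estimate \eqref{vero} applies with $m=1-|\gamma|$; multiplying by the factor $x_n/\langle\xi'\rangle$ produced by the substitution $(x_n,\xi_n)\mapsto(x_n/\langle\xi'\rangle,\xi_n\langle\xi'\rangle)$, which for $x_n$ in a compact set is harmless under $\partial_{x_n},\partial_{\xi_n}$ and contributes exactly one power $\langle\xi'\rangle^{-1}$, one gets
\[
	\Bigl|\partial_{\xi_n}^p\partial_{x_n}^q\bigl[(\partial_{x'}^\delta\partial_{\xi'}^\gamma\varphi)(x',\tfrac{x_n}{\langle\xi'\rangle},\xi',\xi_n\langle\xi'\rangle)\bigr]\Bigr|\lesssim\langle\xi'\rangle^{-|\gamma|}\langle\xi_n\rangle^{1-|\gamma|-p}.
\]
For $|\gamma|=0$ this is the first half of \eqref{pl:phi}; for $|\gamma|\ge1$, bounding $\langle\xi_n\rangle^{1-|\gamma|-p}\le\langle\xi_n\rangle^{-p}$, it gives the second half. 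Both cancellations are used here: vanishing on $\{x_n=0\}$ supplies the $\langle\xi'\rangle^{-1}$ offsetting the cost of the anisotropic dilation, while the homogeneity of $\psi$ makes each tangential $\xi'$-differentiation genuinely lower the order, so the normal order never exceeds $1$.

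\textbf{The induction.} Granting \eqref{pl:phi}, the induction is routine. For $\alpha=\beta=0$ the assertion is $a\in BS^m(\R^{n-1},\R^{n-1};S^m(\R))$, which is Remark \ref{smbs}. If it holds for $(\alpha,\beta)$ with amplitude $\tilde a\in BS^{m-|\alpha|}(\R^{n-1},\R^{n-1};S^{m+|\beta|}(\R))$, then $\partial_{x'_j}$ produces the amplitude $i(\partial_{x'_j}\varphi)\tilde a+\partial_{x'_j}\tilde a$, and by \eqref{pl:phi}, \eqref{eq:prodbs} and stability of $BS$ under $\partial_{x'_j}$ both summands lie in $BS^{m-|\alpha|}(\R^{n-1},\R^{n-1};S^{m+|\beta|+1}(\R))$, the class required for $(\alpha,\beta+e_j)$. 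Applying $\partial_{\xi'_j}$ instead produces $i(\partial_{\xi'_j}\varphi)\tilde a+\partial_{\xi'_j}\tilde a$, and by the second half of \eqref{pl:phi}, \eqref{eq:prodbs} and the fact that $\partial_{\xi'_j}$ lowers the $BS$-order by one, both summands lie in $BS^{m-|\alpha|-1}(\R^{n-1},\R^{n-1};S^{m+|\beta|}(\R))$, the class required for $(\alpha+e_j,\beta)$. This closes the induction and proves the lemma. The main obstacle is precisely \eqref{pl:phi}: the bookkeeping needed to see that, after the anisotropic scaling and arbitrarily many normal derivatives $\partial_{x_n},\partial_{\xi_n}$, the boundary-vanishing and the homogeneity really do balance the cost of the dilation, so that the normal order stays $\le1$ and every tangential covariable derivative improves the $\langle\xi'\rangle$-decay by one full power; everything else is formal manipulation within the $BS$-calculus.
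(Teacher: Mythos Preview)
Your proposal is correct and follows essentially the same route as the paper's proof: induction on $|\alpha|+|\beta|$, base case from Remark~\ref{smbs}, and in the inductive step the Hadamard/Taylor expansion of $\partial_{x'_j}\varphi$ and $\partial_{\xi'_j}\varphi$ at $x_n=0$ to extract the factor $x_n$ (giving the extra $\langle\xi'\rangle^{-1}$ after scaling), combined with the product rule \eqref{eq:prodbs}. The only organizational difference is that you isolate the multiplier statements \eqref{pl:phi} once and for all before the induction, whereas the paper derives the analogous facts for the functions $b$ and $c$ inline at each step; the mathematical content is the same.
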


\begin{proof}
The assertion is proven by induction. It is true if $\alpha=\beta=0$ by Remark \ref{smbs}. 
Suppose now that
\eqref{induc} is true for $|\alpha|+|\beta|<t$, $t\in \N$. We show that it holds true for 
 $|\alpha|+|\beta|=t$. If $\alpha\not =0$ we can write, in view if  the inductive hypothesis,
\begin{eqnarray}
 \label{ind}
\lefteqn{
\partial_{\xi'_j} \big( \partial_{\xi'}^{\alpha- 1_j }D_{x'}^\beta e^{i\psi(x',x_n, \xi', \xi_n ) - 
i \psi_{\partial}(x',\xi')} 
a(x', x_n, \xi', \xi_n)\big)}\nonumber\\
&=&\partial_{\xi'_j} \big(e^{i \psi(x', x_n, \xi', \xi_n)- i \psi_{\partial}(x',\xi')} \tilde{a}
 (x',x_n, \xi', \xi_n)\big)\nonumber\\
&=&e^{i\psi(x',x_n, \xi', \xi_n ) - i \psi_{\partial}(x',\xi')} \big( \partial_{\xi'_j}
 (i\psi(x',x_n, \xi', \xi_n )\nonumber\\
 && - i \psi_{\partial}(x',\xi')) \tilde{a} (x',x_n, \xi', \xi_n )+ \partial_{\xi'_j}\tilde{a}(x', x_n, \xi',\xi_n ) \big)\nonumber \\
&=&e^{i\psi(x',x_n, \xi', \xi_n) - i \psi_{\partial}(x',\xi')} \big(b(x', x_n, \xi', \xi_n) \tilde{a}(x', x_n, \xi', \xi_n )\\
&&+ \partial_{\xi'_j}\tilde{a}(x', x_n, \xi',\xi_n  )\big), \nonumber
\end{eqnarray}
where
\begin{equation}
\label{eq:bx}
 b(x', x_n, \xi', \xi_n)= x_n \int_0^1 \partial_{\xi'_j} \partial_{x_n} \psi(x', t x_n, \xi', \xi_n) dt.
\end{equation}
In \eqref{ind}, we have used a Taylor expansion at $x_n=0$, with $b$ in \eqref{eq:bx}  
the corresponding integral remainder, and the fact that $\psi(x',0,\xi', \xi_n)-\psi_\partial(x',\xi')=0$. Now, we have to verify that 
\[
b(x', x_n, \xi', \xi_n )
\tilde{a}\left(x', x_n, \xi', \xi_n \right)
+ (\partial_{\xi'_j}\tilde{a})\left(x', x_n, \xi',\xi_n \right)
\]
belongs to $BS^{m-|\alpha|}(\R^{n-1}, \R^{n-1}, S^{|\beta|}(\R))$. 
By induction, 
$\tilde{a}\left(x', x_n, \xi', \xi_n\right)$ is an element of 
$BS^{m-|\alpha|+1}(\R^{n-1},\R^{n-1}, S^{|\beta|}(\R))$, 
$x_n$ is an element of $ BS^{-1}(\R^{n-1},$ $\R^{n-1},$ $ S^{0}(\R))$,  and 
$\int_0^1 \partial_{\xi'_j} \partial_{x_n} \psi(x', tx_n, \xi', \xi_n) dt$ is a symbol of order zero. So $b$ in \eqref{eq:bx} belongs to $BS^{-1}(\R^{n-1}, \R^{n-1}; S^0(\R))$ by
Remark \ref{smbs}. 
Then, we just apply the multiplicative property \eqref{eq:prodbs}.

If $\alpha=0$, then we have
\begin{eqnarray*}
\lefteqn{\partial_{x'_j}\big(\partial_{x'}^{\beta-1_j} e^{i\psi(x',x_n, \xi', \xi_n ) - i \psi_{\partial}(x',\xi')} 
a(x', x_n, \xi', \xi_n)\big)}\\
&=&\partial_{x'_j}\big( e^{i\psi(x',x_n, \xi', \xi_n ) - i \psi_{\partial}(x',\xi')} 
\tilde{a}(x', x_n, \xi', \xi_n)\big)\\
&=&e^{i\psi(x',x_n, \xi', \xi_n ) - i \psi_{\partial}(x',\xi')}\big( c(x', x_n, \xi', \xi_n)\tilde{a}(x', x_n, \xi', \xi_n)+ 
\partial_{x_j}\tilde{a}(x', x_n, \xi', \xi_n)\big),
\end{eqnarray*} 
where 
\[
 c(x', x_n, \xi', \xi_n)= x_n \int_0^1 \partial_{x'_j} \partial_{x_n} \psi(x', t x_n, \xi', \xi_n) dt
\]
is the remainder in the Taylor expansion of $\partial_{x'_j}((\psi(x', x_n, \xi', \xi_n)- 
\psi_{\partial}(x', \xi'))$ at $x_n=0$.
Again, by the inductive hypothesis,  $\tilde{a}(x', x_n, \xi', \xi_n )$ 
$ \in BS^{m}$ $(\R^{n-1}, $ $\R^{n-1}, $ $S^{m+|\beta|-1}(\R))$, while
$c \in BS^{0}(\R^{n-1}, \R^{n-1};S^1(\R))$. Thus, applying the multiplicative property
 \eqref{eq:prodbs}, the assertion is proven.
\end{proof}

\begin{proof}[Proof of Theorem \ref{thm:main2}.]
We have to show that, for all $\alpha, \beta \in \N^{n-1}$ and $l, s \in \N$, there exist $\gamma, \delta \in \N$ such that
\begin{equation}
\label{eq:intgen}
 \sup\left|x_n^{l} \partial_{x_n}^{s} \left(\kappa_{\langle \xi' \rangle^{-1} }\partial_{x'}^{\alpha}\partial_{\xi'}^{\beta} A_n^\chi 
 \kappa_{\langle \xi' \rangle}u\right)\right|\leq 
 C_{\beta, \gamma} p_{\gamma, \delta}(u)\langle\xi'\rangle^{m-|\alpha|},
\end{equation}
$p_{\gamma, \delta}$ being seminorms of $\Si(\R)$.
As we have already pointed out, the hypotheses imply that we can apply Theorem \ref{thm:main}, 
that is the Fourier integral operator operator in \eqref{eq:intgen} has a regular SG phase function. 

First, let us suppose $s=0$.
Lemma \ref{Lem:induc} implies that, for all $\alpha$, $\beta$, the operator in \eqref{eq:intgen}
is a Fourier integral operator with SG phase function and a symbol $a \in BS^{m-|\alpha|}(\R^{n-1}, \R^{n-1}; S^{m+\beta}(\R))$. Actually, since $a$ has compact support in the $x_n$ variable, we could write $a \in BS^{m+|\alpha|}(\R^{n-1}, \R^{n-1}, S^{m+|\beta|,0}(\R))$,
where $S^{m+|\beta|,0}(\R, \R)$ is the class of SG symbols of order $(m+|\beta|,0)$, see \cite{CO99}. 
Therefore, the expression in \eqref{eq:intgen} is equivalent to the evaluation of the $\Si(\R)$ continuity of an SG Fourier integral operator with an SG symbol of order $(m+|\beta|,0)$, such that all its seminorms are bounded by a multiple of $\langle \xi'\rangle^{m-|\alpha|}$. 
Hence,  the inequality \eqref{eq:intgen} is a consequence of the theory developed in \cite{CO99}.

If $s>0$, then it is enough to notice that the derivative of the phase function \eqref{eq:1}
is a symbol in $BS^0(\R^{n-1},\R^{n-1}, S^{1,0}(\R) )$, while the derivatives of the
symbol are again of the same type as above. Therefore, also in this case we can use the theory of SG Fourier integral operator developed in \cite{CO99}, recalling the multiplicative property \eqref{eq:prodbs},
extended to the case of SG symbols.

Note that, by a completely similar argument, the same result holds true for the transpose operator
$(A_n^\chi)^t$. Hence we have, by duality, that
$A_n^\chi$ can be extended as an operator-valued symbol in $S^m(\R^{n-1}, \R^{n-1}; \Si'(\R), \Si'(\R))$,
showing that also the second part of \eqref{eq:opsymb} holds true, and completing the proof.
\end{proof}

\begin{rem}
Observe that  $i:L^2(\R) \to \Si'(\R)$ and  $e^+:L^2(\R_+)\to L^2(\R)$ can both be interpreted 
as  operator-valued symbols of order $0$. This implies that
\[
 	 A_n^\chi e^+ \in S^m (\R^{n-1}, \R^{n-1}; L^2(\R_+), \Si'(\R)).
\]
Also $r^+:L^2(\R)\to L^2(\R_+)$ can be considered as an operator-valued symbol of order $0$. Suppose the local symbol $a\in S^0$ of $A^\chi$ is compactly supported with respect to the $x$-variable. Then $a$ can be interpreted as an $\SG$-symbol of order $0,0$. Recalling that $\SG$ Fourier integral operators with
regular phase function and symbol of order $0,0$ are $L^2(\R)$-continuous, see \cite{CO99}, we conclude that in this case, for each $(x^\prime,\xi^\prime)$,
\[
 	A_n^+= r^+ A_n^\chi e^+ \in \mathcal{L} (L^2(\R_+), L^2(\R_+)).
\]
\end{rem}


\begin{thebibliography}{10}

\bibitem{BCS14}
U.~Battisti, S~Coriasco, E.~Schrohe.
\newblock Fourier integral operators of Boutet de Monvel type.
\newblock In preparation.

\bibitem{BU71}
L.~Boutet~de Monvel.
\newblock Boundary problems for pseudo-differential operators.
\newblock {\em Acta Math.}, 126(1-2):11--51, 1971.

%
\bibitem{DAS01}
A.~Cannas~da Silva.
\newblock {\em Lectures on Symplectic Geometry}, volume 1764 of {\em Lecture
  Notes in Mathematics}.
\newblock Springer-Verlag, Berlin, 2001.

%
\bibitem{CO99}
S.~Coriasco.
\newblock Fourier integral operators in {SG} classes. {I}. {C}omposition
  theorems and action on {SG} {S}obolev spaces.
\newblock {\em Rend. Sem. Mat. Univ. Politec. Torino},
  1999, 57(4):249--302 (2002).

\bibitem{CO99b}
S.~Coriasco.
\newblock {Fourier Integral Operators in $SG$ classes II. Application to $SG$
  Hyperbolic Cauchy Problems}.
\newblock {\em Ann. Univ. Ferrara}, 1998, 47:81--122 (1999).

\bibitem{DS76}
J.~J. Duistermaat and I.~M. Singer.
\newblock Order-preserving isomorphisms between algebras of pseudo-differential
  operators.
\newblock {\em Comm. Pure Appl. Math.}, 29(1):39--47, 1976.

%
%
%

\bibitem{GR87}
G.~Grubb.
\newblock Complex powers of pseudodifferential boundary value problems with the
  transmission property.
\newblock In {\em Pseudodifferential operators ({O}berwolfach, 1986)}, volume
  1256 of {\em Lecture Notes in Math.}, pages 169--191. Springer, Berlin, 1987.

%
\bibitem{GH90}
G.~Grubb and L.~H{\"o}rmander.
\newblock The transmission property.
\newblock {\em Math. Scand.}, 67(2):273--289, 1990.

%
\bibitem{HP77}
A.~Hirschowitz and A.~Piriou.
\newblock La propri\'et\'e de transmission pour les distributions de {F}ourier;
  application aux lacunes.
\newblock In {\em S\'eminaire {G}oulaouic-{S}chwartz (1976/1977), \'{E}quations
  aux d\'eriv\'ees partielles et analyse fonctionnelle, {E}xp. {N}o. 14},
  page~19. Centre Math., \'Ecole Polytech., Palaiseau, 1977.

\bibitem{HO03}
L.~H{\"o}rmander.
\newblock {\em The analysis of linear partial differential operators. {III}}.
\newblock Classics in Mathematics. Springer, Berlin, 2007.
\newblock Pseudo-differential operators, Reprint of the 1994 edition.

\bibitem{HO04}
L.~H{\"o}rmander.
\newblock {\em The analysis of linear partial differential operators. {IV}}.
\newblock Classics in Mathematics. Springer, Berlin, 2007.
\newblock Pseudo-differential operators, Reprint of the 1994 edition.

\bibitem{LSV94}
A.~Laptev, Y.~Safarov, and D.~Vassiliev.
\newblock On global representation of {L}agrangian distributions and solutions
  of hyperbolic equations.
\newblock {\em Comm. Pure Appl. Math.}, 47(11):1411--1456, 1994.

%

\bibitem{ME81}
R.~B. Melrose.
\newblock Transformation of boundary problems.
\newblock {\em Acta Math.}, 147(3-4):149--236, 1981.

%
%
\bibitem{RS85}
S.~Rempel and B.-W. Schulze.
\newblock {\em Index theory of elliptic boundary problems}.
\newblock North Oxford Academic Publishing Co. Ltd., London, 1985.
\newblock Reprint of the 1982 edition.

\bibitem{SC01}
E.~Schrohe.
\newblock A short introduction to {B}outet de {M}onvel's calculus.
\newblock In {\em Approaches to singular analysis ({B}erlin, 1999)}, volume 125
  of {\em Oper. Theory Adv. Appl.}, pages 85--116. Birkh\"auser, Basel, 2001.

\bibitem{SC98}
B.-W. Schulze.
\newblock {\em Boundary Value Problems and Singular Pseudo-differential
  Operators}.
\newblock Pure and Applied Mathematics (New York). John Wiley \& Sons Ltd.,
  Chichester, 1998.

%
\bibitem{WE75}
A.~Weinstein.
\newblock Fourier integral operators, quantization, and the spectra of
  {R}iemannian manifolds.
\newblock In {\em G\'eom\'etrie symplectique et physique math\'ematique
  ({C}olloq. {I}nternat. {CNRS}, {N}o. 237, {A}ix-en-{P}rovence, 1974)}, pages
  289--298. \'Editions Centre Nat. Recherche Sci., Paris, 1975.
\newblock With questions by W. Klingenberg and K. Bleuler and replies by the
  author.

%
\bibitem{YA97}
K.~Yagdjian.
\newblock {\em The {C}auchy Problem for Hyperbolic Operators. Multiple characteristics, Micro-local approach}, volume~12 of
  {\em Mathematical Topics}.
\newblock Akademie Verlag, Berlin, 1997.

%
\end{thebibliography}
\end{document}